\renewcommand{\setminus}{\smallsetminus}
\theoremstyle{plain}
\newtheorem{thm}{Theorem}[section]
\newtheorem{theorem}[thm]{Theorem}
\newtheorem*{theorem*}{Theorem}
\newtheorem{thmA}{Theorem}
\newtheorem{corollary}[thm]{Corollary}
\newtheorem{lemma}[thm]{Lemma}
\newtheorem*{fact*}{Fact}
\theoremstyle{remark}
\newtheorem*{rem}{Remark}
\theoremstyle{definition}
\newtheorem{definition}[thm]{Definition}
\newtheorem{example}[thm]{Example}
\newcommand{\N}{\mathbb{N}}
\newcommand{\R}{\mathbb{R}}
\newcommand{\into}{\hookrightarrow}
\renewcommand{\k}{\ensuremath \kappa}
\newcommand{\defn}[1]{\emph{#1}}
\DeclareMathOperator{\Int}{Int}
\newcommand{\set}[1]{\left\{#1\right\}}
\newcommand{\setp}[2]{\left\{#1 : #2\right\}}
\newcommand{\thmref}[1]{Theorem~\ref{#1}}
\newcommand{\corref}[1]{Corollary~\ref{#1}}
\newcommand{\lemref}[1]{Lemma~\ref{#1}}
\newcommand{\exampleref}[1]{Example~\ref{#1}}
\newcommand{\dC}{\operatorname{dist}_C}
\newcommand{\dG}{\operatorname{dist}_G}
\title{Closed subsets of a CAT(0) 2-complex are intrinsically CAT(0)}
\author{Russell Ricks}
\date{\today}
\begin{document}

\begin{abstract}
Let $\kappa \le 0$, and let $X$ be a locally-finite CAT(\k) polyhedral $2$-complex $X$, each face with constant curvature $\kappa$.
Let $E$ be a closed, rectifiably-connected subset of $X$ with trivial first singular homology.
We show that $E$, under the induced path metric, is a complete CAT($\kappa$) space.
\end{abstract}

\maketitle

CAT(\k) spaces are a well-studied generalization of nonpositive curvature from Riemannian manifolds to metric spaces.  Many combinatorial constructions, such as $M_{\k}$-polyhedral complexes, involve gluing together pieces of well-understood CAT(\k) spaces to form new ones.  We extend the class of known examples by taking closed subsets of $2$-dimensional polyhedral CAT(\k) spaces and considering the induced path metric; under proper assumptions, these subspaces are also CAT(\k).

In \cite{planar}, we showed that any closed, simply-connected, rectifiably-connected subset of the plane (with a constant curvature $\k \le 0$ metric) is a complete CAT(\k) space when endowed with the induced path metric.  In this paper, we extend this result to similar subspaces of a CAT(\k) polyhedral $2$-complex.  In particular, we prove the following result.

\begin{thmA}[\thmref{main_theorem}]
\label{main thmA}
Let $E$ be a closed, rectifiably-connected subspace of a locally-finite {\rm CAT(\k)} $M_{\k}$-polyhedral $2$-complex $X$, where $\k \le 0$.  If $H_1(E) = 0$ then $E$, under the induced path metric, is a complete {\rm CAT(\k)} space.
\end{thmA}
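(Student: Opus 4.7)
The plan is to establish that $E$ under the path metric $d_E$ is complete, locally CAT($\k$), and simply connected, then invoke the metric Cartan--Hadamard theorem (which applies because $\k \le 0$) to conclude global CAT($\k$). Completeness should follow because $X$ is complete (being locally finite and CAT($\k$)) and $E$ is closed: any $d_E$-Cauchy sequence is Cauchy in the subspace metric and so converges to a point of $E$, and a standard length-space argument using rectifiable connectedness promotes subspace convergence to $d_E$-convergence. For simple connectivity, the aim is to combine $H_1(E) = 0$ with the local planar structure of $X$ to show that $\pi_1(E)$ has a presentation whose Hurewicz map $\pi_1(E) \to H_1(E)$ is injective, so that $H_1(E) = 0$ forces $\pi_1(E) = 0$. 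One route is to exhibit $E$ as an inverse limit of regular neighborhoods $U_n \subset X$ that deformation retract onto $E$, with $\pi_1(U_n)$ controlled by the $2$-complex structure.

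The heart of the proof is showing that $(E, d_E)$ is locally CAT($\k$). At each $p \in E$, I would produce a neighborhood $U$ such that $U \cap E$ under its induced path metric is CAT($\k$), stratifying by the location of $p$ in $X$. If $p$ lies in the interior of a $2$-face, a small disc around $p$ is isometric to a disc in the constant-curvature $\k$-plane; $E \cap U$ is then a closed planar subset whose first homology vanishes for $U$ chosen small enough, so the main result of \cite{planar} applies directly. If $p$ lies in the interior of an edge, $U$ is a book of $\k$-half-planes glued along a common geodesic; I would apply \cite{planar} on each leaf and then glue via a Reshetnyak-type argument along the convex common edge. If $p$ is a vertex, $U$ is a cone on the link $\Lk(p,X)$, which is a metric graph of systole $\ge 2\pi$ by the CAT($\k$) assumption on $X$; here one must decompose $U$ along geodesics prescribed by the link and reduce sector-by-sector to the planar case.

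The main obstacle will be the vertex case. Near a vertex, $E$ can wind through many faces and edges, so no single planar development captures the local structure, and geodesics in $(E, d_E)$ may bend at the vertex in subtle ways. My plan is to use $H_1(E) = 0$ locally to rule out the pathological windings (loops in the link that would obstruct planar development), then to construct a finite family of planar pieces whose union, glued along the vertex and along common geodesics, carries the local path metric on $E$. Once the pieces are identified, the CAT($\k$) inequality on local triangles should follow from the planar result on each piece together with gluing; verifying this for triangles whose three vertices lie in different pieces, and in particular controlling how geodesics cross the vertex, is where the bulk of the technical work will concentrate.
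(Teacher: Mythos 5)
Your outline does not follow the paper's route (the paper never localizes and never invokes Cartan--Hadamard; it proves the comparison inequality directly for each geodesic triangle in $Y$ by trapping the triangle in the convex hull of its vertices via \lemref{principal_lemma}, reducing to a planar convex piece, and transporting angles through ``limit segments''), and as it stands the proposal has genuine gaps at both of its load-bearing steps. First, simple connectivity: $H_1 = 0$ does not imply $\pi_1 = 0$ for general spaces, and the mechanism you propose --- regular neighborhoods $U_n$ deformation retracting onto $E$, with $\pi_1(E)$ recovered as a limit --- is not available here. A closed, rectifiably-connected subset of a $2$-complex need not be an ANR, need not admit any neighborhood that deformation retracts onto it, and $\pi_1$ does not commute with inverse limits. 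The paper's own Example 1.5 is a warning that homology and homotopy genuinely diverge in this setting: a simple closed curve can be nullhomologous in $\gamma \cup \Int\gamma$ without being nullhomotopic there. Moreover, even granting $\pi_1(E)=0$ in the subspace topology, Cartan--Hadamard needs simple connectivity of $E$ in the (finer) path-metric topology, and the identity map from the path metric to the subspace metric need not be a homeomorphism; you would have to bridge that as well.

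Second, the local CAT($\kappa$) step does not reduce to \cite{planar} in the way you describe. The hypotheses do not localize: for $p$ in the interior of a face, $E \cap U$ need not have trivial $H_1$ (nor be rectifiably connected with controlled path metric) for any small $U$, so the planar theorem cannot simply be ``applied directly.'' At an edge point, Reshetnyak gluing requires the two pieces to be glued along a complete convex subspace isometric in both; the gluing locus here is $E$ intersected with the edge, which can be a Cantor set, so the gluing theorem does not apply. And the vertex case, which you correctly identify as the crux, is left without an actual argument. The paper avoids all of this by working globally with a fixed geodesic triangle $T$ in $Y$: the Curve Theorem (\thmref{curve_theorem}) and \lemref{principal_lemma} force $T$ to lie in the convex hull of its three vertices, \lemref{small_simple_triangles_are_planar} shows a small such hull embeds in $M_\kappa^2$ so that \cite[Theorem B]{planar} computes the vertex angle via limit segments, and \lemref{feature_2} extends those limit segments as $X$-geodesics across $T \cup \Int T$ to the opposite side, after which Alexandrov's lemma and the CAT($\kappa$) inequality in $X$ finish the comparison. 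If you want to salvage a local-to-global strategy, you would at minimum need substitutes for these global lemmas to control $\pi_1$ and the local geometry simultaneously.
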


(Throughout this paper, we assume $\k \le 0$.)

We remark that another potential route to prove Theorem \ref{main thmA} is to solve the isoperimetric inequality for simple closed curves \cite{lytchak-wenger18}.
Unfortunately, it is not clear how to do this; in particular, the behavior of simple closed curves can be rather subtle in a CAT(\k) $2$-complex (see Example \ref{scc example} below, for instance), and there is no general $1$-Lipschitz retraction from the full $2$-complex to a closed subset, even if the subset is contractible.

The paper proceeds as follows:  We first generalize some aspects of the Jordan Curve Theorem to the context of CAT(\k) $2$-complexes (\thmref{curve_theorem}).  This allows us to make assertions about the behavior of curves in $Y$ at the points where they are locally geodesic (\lemref{principal_lemma}).  Finally, we reduce the theorem to the case where $X$ is planar, and use a characterization of angles proved in \cite[Theorem B]{planar}.

The author would like to thank Eric Swenson for suggesting the problem.

\section{Preliminaries}

Write $M_\k^2$ for the plane, equipped with the metric of constant curvature $\k$.  (Throughout this paper, we assume $\k \le 0$.)

Let $X$ be a geodesic space, and let $\angle_p^{(\k)} (q,r)$ be the angle at $\overline p$ in the comparison triangle $\triangle(\overline p, \overline q, \overline r)$ in $M_\k^2$ for $\triangle(p, q, r)$.  Suppose $\sigma \colon [0,1] \to X$ and $\tau \colon [0,1] \to X$ are constant-speed geodesic segments emanating from the point
$p \in X$, with $\sigma (1) = q$ and $\tau (1) = r$.  The \defn{Alexandrov angle} between $\sigma$ and $\tau$ is defined as
\[\angle_p (\sigma,\tau) = \lim_{\epsilon \to 0^+} \sup_{0 < t,t' < \epsilon}
\angle_p^{(0)} (\sigma(t),\tau(t')).\] If $X$ is uniquely geodesic, we can define Alexandrov angle at $p$ between $q$ and $r$ to be
\[\angle_p (q,r) = \lim_{\epsilon \to 0^+} \sup_{0 < t,t' < \epsilon}
\angle_p^{(0)} (\sigma(t),\tau(t')).\] 
We will use the following characterization of CAT(\k) spaces \cite[Proposition~II.1.7(4)]{bridson}.

\begin{definition}
For a geodesic metric space $X$, and $\k \le 0$, we say that $X$ is {\rm CAT(\k)} if $\angle_p (q,r) \le \angle_p^{(\k)} (q,r)$ for every triple of distinct points $p,q,r \in X$.
\end{definition}

Every CAT(\k) space (with $\k \le 0$) is a CAT(0) space, and every CAT(0) space is uniquely geodesic.
For more on CAT(\k) spaces, we refer the reader to \cite{bridson}.

\begin{definition}
For distinct points $x$ and $y$ in a uniquely geodesc metric space $Z$, write $[x,y]_Z$ for the geodesic in $Z$ between $x$ and $y$.
\end{definition}

Throughout this paper, we will assume $X$ is a locally-finite CAT(\k) $M_{\k}^2$-polyhedral $2$-complex.
The technical hypotheses that $X$ be locally finite and that each face of $X$ has 
curvature $\k$ ensure that every point $p \in X$ has a \defn{conical neighborhood}---that is, an open neighborhood isometric to a convex open set in the $\k$-cone over the link of $p$, and this isometry maps $p$ to the cone point \cite[Theorem I.7.39]{bridson}.

We will often identify a simple closed curve with its image throughout the paper.

\begin{definition}
Let $\gamma$ be a simple closed curve in $X$.  Define the \defn{interior of $\gamma$} to be
\[\Int \gamma = \setp{x \in X \setminus \gamma}{[\gamma] \neq 0 \in H_1(X \setminus \set{x})}.\]
Note that $\gamma \cup \Int \gamma$ is the intersection of the images of all singular 2-chains with boundary $\gamma$.
In particular, $\gamma \cup \Int \gamma$ is compact.
\end{definition}

We remark that the interior of a simple closed curve can be somewhat subtle, as the following example illustrates.

\begin{figure}[h]
\includegraphics{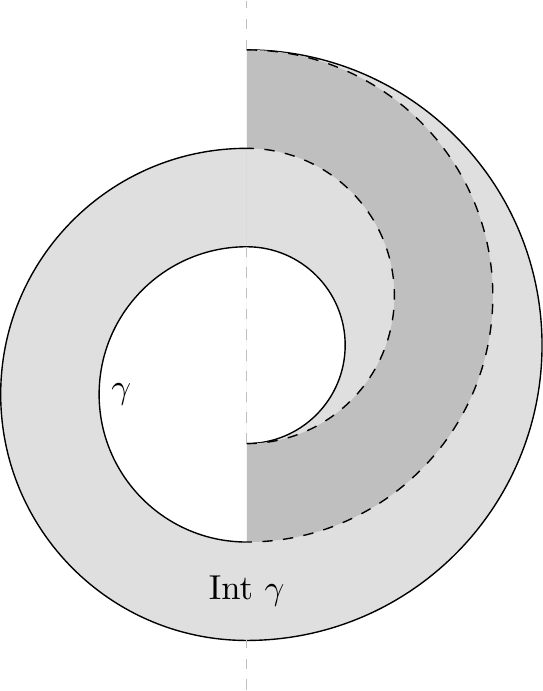}
\caption{The interior of the simple closed curve $\gamma$ of \exampleref{scc example} is not homeomorphic to a disk, nor is it open.
And though $\gamma$ is nullhomologous in $\gamma \cup \Int \gamma$, it is not nullhomotopic (in particular, $\gamma$ does not bound a disk in $\gamma \cup \Int \gamma$).}
\label{fig:example}
\end{figure}

\begin{example}
\label{scc example}
Let $X$ be the CAT(0) Euclidean $2$-complex formed by gluing three flat half-planes $H_1, H_2, H_3 = \setp{(x,y)}{x \ge 0}$, along the boundary edge of each.
Let $\gamma$ be a simple closed curve in $X$ that follows the following pattern (see Figure \ref{fig:example}):
Trace arcs from $(0,-1)$ to $(0,1)$ in $H_1$, then from $(0,1)$ to $(0,-2)$ in $H_2$, then from $(0,-2)$ to $(0,3)$ in $H_3$, then from $(0,3)$ to $(0,-3)$ in $H_1$, then from $(0,-3)$ to $(0,2)$ in $H_2$, then from $(0,2)$ to $(0,-1)$ in $H_3$.
The interior of $\gamma$ is homeomorphic to a punctured torus (not a disk).
It is also not open in $X$.

Notice that replacing $H_1$ by $\pi_1$ in the definition of $\Int \gamma$ defines a strictly larger set in this example---the point $(0,0) \in X$, for instance, lies in $\Int_{\pi_1} \gamma$ but not in $\Int_{H_1} \gamma$.
\end{example}

\section{The Curve Theorem}

The interior of a simple closed curve in $X$ is not open in general, but it is when restricted to the faces (i.e., open 2-cells) of $X$.

\begin{lemma}
\label{open_face_lemma}
Let $\gamma$ be a simple closed curve in $X$.  Then the intersection of $\Int \gamma$ with each face of $X$ is open in $X$.
\end{lemma}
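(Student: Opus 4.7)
The plan is, given $x \in \Int \gamma \cap F$, to produce a small neighborhood of $x$ entirely contained in $\Int \gamma$. Since $\gamma$ is compact (as the image of a continuous map from $S^1$) and $x \in X \setminus \gamma$, and since $F$ is an open 2-cell of $X$ (hence homeomorphic to an open disk in $\R^2$), I can choose an open set $D \subset F$ homeomorphic to an open Euclidean disk, containing $x$, and with $\overline{D} \cap \gamma = \emptyset$. It then suffices to show $D \subset \Int \gamma$.

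The core step is to upgrade ``$x \in \Int \gamma$'' to ``$D \subset \Int \gamma$'' via an ambient homeomorphism argument. For any $y \in D$, I will produce a homeomorphism $h \colon X \to X$ that is the identity outside $D$ and satisfies $h(x) = y$. Such an $h$ exists because the group of compactly supported self-homeomorphisms of an open disk acts transitively on its points, and any such homeomorphism of $D$ extends by the identity to a self-homeomorphism of $X$. Since $D$ is disjoint from $\gamma$, the map $h$ fixes $\gamma$ pointwise; restricting, $h$ induces a homeomorphism $X \setminus \{x\} \to X \setminus \{y\}$ that is the identity on $\gamma$. Therefore $h_*[\gamma] = [\gamma]$ under the induced isomorphism $H_1(X \setminus \{x\}) \to H_1(X \setminus \{y\})$, and ``$[\gamma] \neq 0 \in H_1(X \setminus \{x\})$'' forces ``$[\gamma] \neq 0 \in H_1(X \setminus \{y\})$''. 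Hence $y \in \Int \gamma$.

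Since $y \in D$ was arbitrary, $D \subset \Int \gamma$; as $D$ is an open neighborhood of $x$ in $X$ contained in $F$, this shows $\Int \gamma \cap F$ is open in $X$. The only step that needs more than the definitions is the construction of the compactly supported homeomorphism $h$ of $D$ sending $x$ to $y$. This is a standard fact about the plane (one can, for instance, build $h$ as the time-$1$ flow of a smooth compactly supported vector field on $D$ whose trajectory through $x$ reaches $y$), so I expect this to be the only point in the write-up requiring any real care, and even there only a short remark should be needed.
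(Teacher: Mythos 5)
Your proof is correct, but it takes a genuinely different route from the paper's. The paper keeps everything inclusion-induced: it takes a small ball $U = B_X(x,\delta) \subset X_f \setminus \gamma$, deformation-retracts $\overline{U} \setminus \{y\}$ onto $\partial U$ and pastes with the identity to show $X \setminus U \into X \setminus \{y\}$ is a homotopy equivalence for every $y \in U$; since $\gamma \subset X \setminus U$, both $H_1(X \setminus \{x\})$ and $H_1(X \setminus \{y\})$ are identified with $H_1(X \setminus U)$ by inclusion-induced isomorphisms under which $[\gamma]$ corresponds to $[\gamma]$, so nonvanishing transfers from $x$ to $y$. You instead invoke homogeneity of the disk: a self-homeomorphism $h$ of $X$ supported in a small open disk $D \subset F$ disjoint from $\gamma$, carrying $x$ to $y$ and fixing $\gamma$ pointwise, restricts to a homeomorphism $X \setminus \{x\} \to X \setminus \{y\}$ sending $[\gamma]$ to $[\gamma]$. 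Both arguments are sound; yours makes the invariance of the class $[\gamma]$ completely transparent (it is carried by an ambient homeomorphism fixing $\gamma$), at the cost of citing the transitivity of the compactly supported homeomorphism group of the open disk, while the paper's version needs only a radial deformation retraction plus the pasting lemma --- the same elementary toolkit it reuses in the later accumulation lemmas. Two small points to make explicit in a final write-up: the extension of $h$ by the identity is continuous (hence a homeomorphism of $X$) because the support of $h$ is compact in $D$, so $h$ is the identity on the open set $X \setminus K$ for some compact $K \subset D$ and one can paste over the open cover $\{D,\, X \setminus K\}$; and $D$ is genuinely open in $X$, since open $2$-cells are open subsets of the $2$-complex --- this is also what licenses calling $D$ a neighborhood of $x$ in $X$ (the paper's proof relies on the same fact when it chooses $B_X(x,\delta) \subset X_f \setminus \gamma$).
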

\begin{proof}
Suppose $x \in \Int \gamma$ lies on a face $X_f$ of $X$.  Find $\delta > 0$ such that $U := B_X (x, \delta) \subset X_f \setminus \gamma$.  Let $y \in U$; since $\overline{U}$ is homeorphic to the closed disk, we have a deformation retraction of $\overline{U} \setminus \set{y}$ to $\partial U$.  Thus $X \setminus U$ is a deformation retract of $X \setminus \set{y}$ by the Pasting Lemma.  Hence the inclusion $X \setminus U \into X \setminus \{y\}$ is a homotopy equivalence.  So we have ismorphisms $H_1 (X \setminus \set{x}) \to H_1 (X \setminus U) \to H_1 (X \setminus \set{y})$.  Thus $[\gamma] \neq 0 \in H_1 (X \setminus \set{x})$ gives us $[\gamma] \neq 0 \in H_1 (X \setminus \set{y})$, hence $y \in \Int \gamma$.  Therefore $U \subset \Int \gamma$, which proves $X_f \cap \Int \gamma$ is open in $X$.
\end{proof}

For completeness of exposition, we now prove two simple lemmas.

\begin{lemma}
\label{disk_lemma}
Let $D$ be the closed unit disk in the plane, and let $x \in \partial D$.  For every open neighborhood $U$ of $x$, there is an open neighborhood $V \subset U$ of $x$, a point $z \in V \cap \Int D$, and a deformation retraction $D \setminus \set{z} \to \partial D$ such that the image of $D \setminus V$ lies in $\partial D \setminus V$.
\end{lemma}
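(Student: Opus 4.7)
The plan is to take $V$ to be a small convex neighborhood of $x$ in $D$, fix an interior point $z \in V$, and use the straight-line homotopy onto the radial projection from $z$. Convexity of $V$ is the single key ingredient, and it will force the required containment $r_z(D \setminus V) \subset \partial D \setminus V$.

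First, I would choose $\epsilon > 0$ small enough that the planar open ball satisfies $V := B(x, \epsilon) \cap D \subset U$. Being the intersection of two convex planar sets, $V$ is convex; it is also an open neighborhood of $x$ in $D$. Pick any $z \in V \cap \Int D$, and define $r_z \colon D \setminus \set{z} \to \partial D$ by sending each $w$ to the unique point where the ray from $z$ through $w$ first exits $D$. Standard checks give that $r_z$ is continuous, fixes $\partial D$ pointwise (since for $w \in \partial D$ the ray from an interior point $z$ through $w$ first meets $\partial D$ at $w$), and that the straight-line homotopy $H(w, t) = (1-t) w + t\, r_z(w)$ is a deformation retraction of $D \setminus \set{z}$ onto $\partial D$; it avoids $z$ because $w$ lies on the segment $[z, r_z(w)]$ strictly between $z$ and $r_z(w)$ whenever $w \in \Int D$, and $H(w,t) = w$ when $w \in \partial D$.

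The final and only substantive step is to prove $r_z(D \setminus V) \subset \partial D \setminus V$, which I would do by contrapositive: if $w \in D \setminus \set{z}$ has $r_z(w) \in V$, then both $z$ and $r_z(w)$ lie in the convex set $V$, so the segment $[z, r_z(w)] \subset V$; since $w$ lies on this segment, $w \in V$. The main (and minor) obstacle is simply arranging that $V$ be convex, which we get for free by taking $V$ as an intersection of two convex planar sets.
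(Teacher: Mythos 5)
Your proof is correct and follows essentially the same route as the paper: choose a convex neighborhood $V$ of $x$, radially project from an interior point $z \in V$, and use convexity of $V$ to conclude that the preimage of $V$ under the projection lies in $V$ (equivalently, $D \setminus V$ maps into $\partial D \setminus V$). The only difference is that you spell out the straight-line homotopy and the contrapositive argument that the paper compresses into ``by convexity of $V$, $f^{-1}(V) \subset V$.''
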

\begin{proof}
Let $U$ be an open neighborhood of $x$.  Then there is some $\delta > 0$ such that $V := B (x, \delta) \subset U$.  Pick $z \in V$, and let $f$ be the radial projection of $D \setminus \set{z}$ onto $\partial D$.  By convexity of $V$, $f^{-1}(V) \subset V$.  Thus $f$ is a deformation retraction such that $f(D \setminus V) \subset \partial D \setminus V$.
\end{proof}

\begin{lemma}
\label{simple_arc_lemma}
Let $\alpha \colon [0,1] \to B$ be a topological embedding into a space $B$ such that $\alpha([0,1]) \setminus (\alpha(0) \cup \alpha(1))$ is open in $B$.  Let $A = \alpha([0,1])$.
Assume there is a continuous map $\sigma \colon S^1 \to B$ of the unit circle to $B$, and let $C = \sigma^{-1}(A)$.  If $\sigma \rvert_{C}$ is a bijection onto $A$ then $[\sigma] \neq 0 \in H_1(B)$.
\end{lemma}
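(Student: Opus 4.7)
The plan is to construct a continuous collapse map $f \colon B \to S^1$ so that $f \circ \sigma \colon S^1 \to S^1$ has degree $\pm 1$, which implies $f_*[\sigma] \neq 0$ and hence $[\sigma] \neq 0 \in H_1(B)$. I take $f$ to identify $S^1$ with $A/\partial A$ via the quotient $q \colon A \to A/\partial A$ crushing $\{\alpha(0), \alpha(1)\}$ to a basepoint $*$, and to send every point of $B \setminus A$ to $*$ as well.

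First, I would upgrade the hypothesis on $\sigma|_C$ to a homeomorphism statement. The subspace $A$ is compact and Hausdorff (being the image of $[0,1]$ under a topological embedding into the ambient space $B$, which is implicitly Hausdorff); the set $C = \sigma^{-1}(A)$ is closed and hence compact in $S^1$; and any continuous bijection from a compact space to a Hausdorff space is a homeomorphism. So $\sigma|_C \colon C \to A$ is a homeomorphism, which forces $C$ to be a closed arc in $S^1$ whose endpoints are sent by $\sigma$ to $\partial A$.

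Next I would verify continuity of $f$. Writing $A^\circ = A \setminus \partial A$, preimages of open sets $V \subset S^1 \setminus \{*\}$ lie inside the open set $A^\circ$ and are open there via the homeomorphism $q|_{A^\circ} \colon A^\circ \to S^1 \setminus \{*\}$. For an open neighborhood $V$ of $*$, the set $K := A \setminus q^{-1}(V)$ avoids $\partial A$, so it is a compact subset of $A^\circ$, hence closed in $B$ by Hausdorffness; therefore $f^{-1}(V) = B \setminus K$ is open.

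Finally I would compute the composition $f \circ \sigma \colon S^1 \to S^1$. On $C^\circ$, it equals $q|_{A^\circ} \circ \sigma|_{C^\circ}$, the composition of two homeomorphisms onto $S^1 \setminus \{*\}$; on the complementary closed arc $S^1 \setminus C^\circ$, it is constantly $*$. Thus $f \circ \sigma$ factors as the quotient $S^1 \to S^1/(S^1 \setminus C^\circ)$ followed by a homeomorphism onto $S^1$, and the quotient collapses a contractible closed arc in $S^1$ and so is a homotopy equivalence. Consequently $(f \circ \sigma)_* \colon H_1(S^1) \to H_1(S^1)$ is an isomorphism, so $[f \circ \sigma]$ generates $H_1(S^1) \cong \Z$ and $[\sigma] \neq 0 \in H_1(B)$. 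The main work of the argument is the continuity check for $f$, where the hypothesis that $A^\circ$ is open in $B$ plays its essential role.
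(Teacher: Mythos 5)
Your proof is correct and follows essentially the same route as the paper's: both collapse $B \setminus A^\circ$ to a basepoint to get a map $B \to S^1$ (the paper phrases the target as $C/\partial C$ via $(\sigma|_C)^{-1}$, you as $A/\partial A$, which are identified by the homeomorphism $\sigma|_C$) and then observe that the composition with $\sigma$ has degree $\pm 1$. You supply the continuity check and the compact-to-Hausdorff argument that the paper leaves implicit, but the underlying idea is identical.
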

\begin{proof}
Let $f \colon B \to C / \partial C$ be given by $f \rvert_{A} = \left( \sigma \rvert_{C} \right)^{-1}$ and $f(b) = \partial C$ for $b \in B \setminus A$.  Let $h \colon C / \partial C \to S^1$ be a homeomorphism.  Then $h \circ f \circ \sigma$ is a degree $\pm 1$ map $S^1 \to S^1$, hence $(h \circ f)_* [\sigma] \neq 0$.  Therefore $[\sigma] \neq 0$.
\end{proof}

We now extend one important feature of the Jordan Curve Theorem to $X$:  The interior of a simple closed curve $\gamma$ in $X$ accumulates on each point of $\gamma$.
We first prove this result for points of $\gamma$ on the faces of $X$.

\begin{lemma}
\label{accumulation_lemma}
Let $\gamma$ be a simple closed curve in $X$, and let $x$ be a point on $\gamma$ that lies on a face of $X$.  Then $\Int \gamma$ accumulates on $x$.
\end{lemma}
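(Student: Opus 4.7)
The plan is to exploit the local disk structure at $x$ together with Lemma~\ref{simple_arc_lemma}. First I would pick a small closed disk $D \subset X_f$ with $x$ in its interior, and let $J$ be the connected component of $\gamma^{-1}(\Int D)$ containing $t_0$, where $\gamma(t_0) = x$. Setting $A := \gamma(\overline{J})$, I would check that the endpoints of $\overline{J}$ map to $\partial D$ (as in the proof of \lemref{open_face_lemma}'s setup), so $A$ is a simple arc in $D$ from $\partial D$ through $x$ back to $\partial D$. By the Jordan arc theorem inside the disk, $\Int D \setminus A$ has two open components $L^\circ$ and $R^\circ$, both accumulating on $x$. It is enough to produce $z \in L^\circ \cup R^\circ$ arbitrarily close to $x$ with $[\gamma] \neq 0 \in H_1(X \setminus \{z\})$.

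Next I would apply \lemref{simple_arc_lemma} with the subspace $B := (X \setminus \Int D) \cup A$ and the arc $\alpha = A$. The open arc $A \setminus \partial A$ is open in $B$: any other nearby point of $X$ lies in $L^\circ \cup R^\circ$, which has been excluded, so $B$-neighborhoods of interior points of $A$ consist only of points on $A$. Under the favorable assumption $\gamma \cap \Int D = A$, one has $\gamma \subset B$, and the restriction $\gamma|_{\overline{J}} : \overline{J} \to A$ is a bijection (by injectivity of $\gamma$), so \lemref{simple_arc_lemma} yields $[\gamma] \neq 0 \in H_1(B)$. Picking $z \in L^\circ \cup R^\circ$, one has $B \subset X \setminus \{z\}$, and the inclusion induces a map $H_1(B) \to H_1(X \setminus \{z\})$; using that $X$ is contractible (as a $\mathrm{CAT}(\k)$ space with $\k \le 0$), one checks via excision of $L^\circ \cup R^\circ \setminus \{z\}$ that this map carries $[\gamma]$ to a nonzero class, giving $z \in \Int \gamma$.

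The main obstacle is the wiggle case, where $\gamma \cap \Int D$ contains additional arcs $A_2, A_3, \dots$ besides $A$; this can happen even for arbitrarily small $D$ if $\gamma$ oscillates near $x$. Each such $A_i$ lies in $L^\circ \cup R^\circ$, so $\gamma \not\subset B$ and \lemref{simple_arc_lemma} does not apply directly. To handle this, I would modify $\gamma$ inside $X \setminus \{z\}$ homologically: for each extra arc $A_i$ (which has endpoints on $\partial D$), replace the segment of $\gamma$ tracing $A_i$ by a homologous arc in $\partial D$, chosen so that the bounding correction cycle encloses the component of $D \setminus A_i$ not containing $x$. Since $x \notin A_i$, the point $x$ lies in a definite component of $D \setminus A_i$; provided $z$ is close enough to $x$ to lie in that same component, each correction contributes nothing to the homology class in $H_1(X \setminus \{z\})$. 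The hard part is making these choices simultaneously when infinitely many $A_i$ accumulate on $x$, which requires a nested selection of $z$ exploiting compactness of $\gamma$ and the injectivity of the embedding.
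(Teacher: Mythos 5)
There is a genuine gap at the central homological step. After establishing $[\gamma] \neq 0 \in H_1(B)$ with $B = (X \setminus \Int D) \cup A$, you assert that for \emph{any} $z \in L^\circ \cup R^\circ$ the inclusion $B \hookrightarrow X \setminus \set{z}$ carries $[\gamma]$ to a nonzero class. This is false: take $X = M_\k^2$, $\gamma$ a round circle, and $x \in \gamma$; then one of $L^\circ, R^\circ$ lies outside $\gamma$, and $[\gamma] = 0$ in $H_1(X \setminus \set{z})$ for $z$ on that side. (Excision cannot rescue this---it computes relative groups, and the honest comparison, deformation retracting $X \setminus \set{z}$ onto the complement of the one component containing $z$, leaves the other component filled in, which is exactly what can kill the class.) Your argument gives no mechanism for selecting the correct side, and indeed no single side can be selected a priori. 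The paper's proof resolves precisely this point by puncturing \emph{both} sides at once: it builds two Jordan domains $U_1, U_2$ (your $L^\circ, R^\circ$), chooses $z_i \in U_i$, deformation retracts $X \setminus \set{z_1, z_2}$ onto $\eta_1 \cup \eta_2$, applies \lemref{simple_arc_lemma} there to get $[\gamma] \neq 0 \in H_1(X \setminus \set{z_1,z_2})$, and then uses the Mayer--Vietoris splitting $H_1(X \setminus \set{z_1,z_2}) \cong H_1(X \setminus \set{z_1}) \oplus H_1(X \setminus \set{z_2})$ (valid since $X$ is contractible) to conclude that at least one of $z_1, z_2$ lies in $\Int\gamma$---without ever deciding which.

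The second gap is the wiggle case, which you correctly identify but explicitly leave unresolved. The surgery you propose breaks down in exactly the situation you flag: if extra strands $A_i$ accumulate on $x$ from the side containing $z$, then no choice of $z$ near $x$ lies in the ``$x$-component'' of $D \setminus A_i$ for all $i$, so the correction cycles do not all vanish in $H_1(X \setminus \set{z})$, and the ``nested selection'' is not supplied. The paper avoids surgery altogether: it applies \lemref{simple_arc_lemma} not to the whole arc $A$ but to a small subarc $\alpha'$ through $x$, after arranging (via \lemref{disk_lemma}) that the retraction maps everything in $\gamma \setminus \alpha'$ off of $\alpha'$, so that $\gamma \cap f^{-1}(\alpha') = \alpha'$ and the hypotheses of \lemref{simple_arc_lemma} hold regardless of how the other strands behave. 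You would need both of these ideas---the two-puncture Mayer--Vietoris argument and the localization to a subarc---to complete your outline.
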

\begin{proof}
Let $r > 0$ be such that $\overline{B_X} (x, r)$ lies in a face of $X$.
Let $\delta \in (0,r)$ be given; we show there is some point of $\Int \gamma$ in $B_X (x,\delta)$.
Note we may assume $\delta$ is small enough that $\gamma \nsubseteq \overline{B_X} (x, \delta)$.
Fix $\delta'$ with $\delta < \delta' < r$.

Let $p$ and $q$ be the endpoints of the maximal arc $\alpha$ of $\gamma$ in $\overline{B_X} (x, \delta)$ on which $x$ lies.  By choice of $r$, the geodesics $[x, p]_X$ and $[x, q]_X$ uniquely extend to geodesics $[x, p']_X$ and $[x, q']_X$, respectively, with $p', q' \in \partial B_X (x, \delta')$.  Note that (up to reparametrization) there are two arcs in $\partial B_X (x, \delta')$ from $q'$ to $p'$; let $\beta_1$ be one and $\beta_2$ the other.  Let $\eta_1$ and $\eta_2$ be the concatenated paths $\eta_i = \alpha * [q, q']_X * \beta_i * [p', p]_X$ (see Figure \ref{fig:acclem}).  Thus $\eta_1$ and $\eta_2$ are simple closed curves in $B_X (x, r)$.

\begin{figure}[h]
\includegraphics{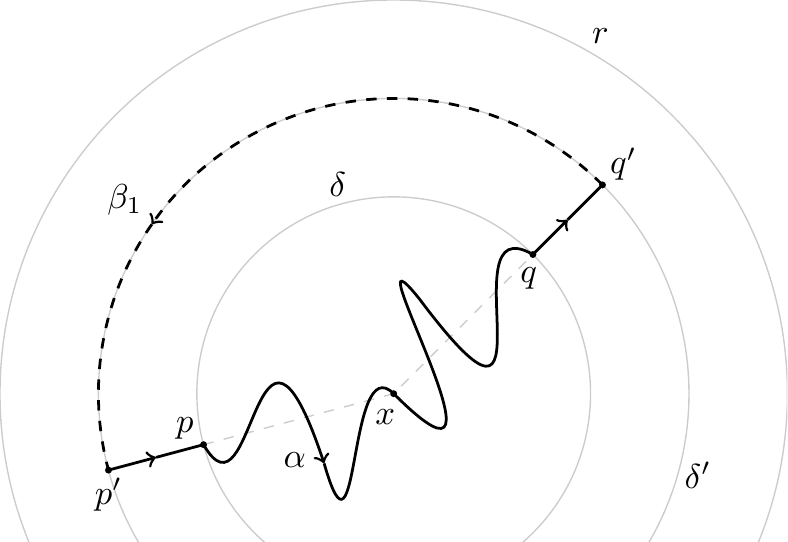}
\caption{The curve $\eta_1 = \alpha * [q, q']_X * \beta_1 * [p', p]_X$.} 
\label{fig:acclem}
\end{figure}

For $i = 1, 2$, let $U_i$ be the bounded component of $B_X (x, r) \setminus \eta_i$ guaranteed by the Jordan Curve Theorem.  Note that $U_1$ and $U_2$ are disjoint, and by Schoenflies, each $\overline{U_i}$ is homeomorphic to the closed unit disk.  By \lemref{disk_lemma}, we may find a nondegenerate subarc $\alpha'$ of $\alpha$ containing $x$, points $z_i \in U_i \cap B_X (x, \delta)$, and deformation retractions of $\overline{U_i} \setminus \set{z_i}$ onto $\eta_i$ mapping $(\gamma \setminus \alpha') \cap B_X (x, r)$ to $\eta_i \setminus \alpha'$.  Pasting these deformation retractions together with the geodesic retraction (projection) of $X$ onto $\overline{B_X} (x, \delta')$, we have a deformation retraction $f \colon X \setminus \set{z_1, z_2} \to \eta_1 \cup \eta_2$ such that $\gamma \cap f^{-1}(\alpha') = \alpha'$.  Thus
\[\xymatrix{
H_1 (X \setminus \set{z_1, z_2}) \ar[r]^-{f_*} & H_1 (\eta_1 \cup \eta_2)
}\]
is an isomorphism that maps $[\gamma] \mapsto f_*[\gamma] \neq 0$ by \lemref{simple_arc_lemma}.  Hence $[\gamma] \neq 0 \in H_1 (X \setminus \set{z_1, z_2})$.  By Mayer-Vietoris (recall that $X$ is contractible) we have an isomorphism
\[\xymatrix{
H_1 (X \setminus \set{z_1, z_2}) \ar[r] & H_1 (X \setminus \set{z_1}) \oplus H_1 (X \setminus \set{z_2})
}\]
induced by inclusions, and thus $[\gamma] \neq 0$ in at least one of $H_1 (X \setminus \set{z_1})$ and $H_1 (X \setminus \set{z_2})$.  Therefore, either $z_1$ or $z_2$ is an element of $\Int \gamma$.
\end{proof}

We now drop the hypothesis that $x$ lie on a face of $X$.

\begin{lemma}\label{curve_theorem_1}
Let $\gamma$ be a simple closed curve in $X$.
Then $\Int \gamma$ accumulates on every point of $\gamma$.
In fact, the points of $\Int \gamma$ that lie on faces of $X$ so accumulate.
\end{lemma}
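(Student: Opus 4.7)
The plan is to reduce to \lemref{accumulation_lemma} by a case analysis on the position of $x$ and on the local behavior of $\gamma$ near $x$. Given $x \in \gamma$ and $\delta > 0$, the goal is to produce a point of $\Int \gamma$ lying on a face of $X$ inside $B_X(x, \delta)$.

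The easiest case is when $x$ itself lies on a face: shrink $\delta$ so that $B_X(x, \delta)$ is contained in that face, and the points $z_1, z_2 \in \Int \gamma$ produced by \lemref{accumulation_lemma} then automatically lie on a face. So assume $x$ lies on the $1$-skeleton of $X$. I would split according to whether $\gamma$ meets faces arbitrarily close to $x$. In the first subcase a sequence $y_n \in \gamma$ with $y_n$ on faces and $y_n \to x$ is available; applying the face case to each $y_n$ yields $z_n \in \Int \gamma$ on a face with $\operatorname{dist}(y_n, z_n) < 1/n$, and hence $z_n \to x$. In the second subcase, $\gamma \cap B_X(x, r) \subset X^{(1)}$ for some $r > 0$; taking $B_X(x,r)$ to be a conical neighborhood and $X_f$ a face of $X$ with $x \in \overline{X_f}$ (which exists because $X$ is a locally-finite $2$-complex), I would adapt the construction of \lemref{accumulation_lemma}, letting the ``disk in a face'' there be replaced by a $2$-dimensional sub-region of $\overline{X_f}$ bounded by radial segments from $x$ and by arcs of $\partial B_X(x,\delta')$ passing through $X_f$. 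Lemmas \ref{disk_lemma} and \ref{simple_arc_lemma} together with Mayer-Vietoris then finish the argument as before.

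The hard part is the second subcase. The local geometry around $x$ is no longer a planar disk, so one cannot apply Jordan and Schoenflies to $B_X(x,\delta)$ directly; one must work inside a chosen $\overline{X_f}$ and arrange the closing arcs on $\partial B_X(x, \delta')$ so that the resulting simple closed curves $\eta_1, \eta_2$ each bound a disk inside $\overline{X_f}$. Extra care is required when $x$ is a vertex at which no edge of the arc $\gamma \cap B_X(x,r)$ lies on $\partial X_f$, since then one must bridge between consecutive faces in the link $\Lk(x)$ to connect the endpoints of $\gamma \cap B_X(x,r)$ through the interior of $\overline{X_f}$. Once a suitable planar slab in $\overline{X_f}$ has been set up, the deformation-retraction and $H_1$-detection arguments of \lemref{accumulation_lemma} should go through verbatim.
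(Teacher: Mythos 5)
Your reduction to the case where $\gamma$ lies in the $1$-skeleton near $x$ is fine and matches the paper: your first two cases are exactly Lemma~\ref{accumulation_lemma} plus the observation that the interior points it produces lie on faces. The gap is in your third case, which you correctly flag as the hard part but do not actually carry out, and the route you sketch there fails. The problem is your commitment to a single face $X_f$ with $x \in \overline{X_f}$: the set $\Int\gamma$ may be entirely disjoint from that face near $x$. Concretely, in the complex of Example~\ref{scc example} (three half-planes $H_1,H_2,H_3$ glued along a line $L$), let $\gamma$ run along $L$ from $(0,-1)$ to $(0,1)$ and return via an arc in $H_2$; then $\Int\gamma$ is the enclosed region of $H_2$, and no point of $H_1$ or $H_3$ near $x=(0,0)$ lies in $\Int\gamma$. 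So if you happen to choose $X_f = H_1$, no amount of care in arranging $\eta_1,\eta_2$ will produce an interior point there. A second, related obstruction: inside a single face adjacent to the edge, the arc $\gamma\cap B$ has only one side, so the two-region, two-point Mayer--Vietoris dichotomy of Lemma~\ref{accumulation_lemma} degenerates to one candidate point; and the punctured ball does not deformation retract onto curves lying in one face (the other half-disks have nowhere to go), so the $H_1$-detection step does not ``go through verbatim.''

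What is needed, and what the paper does, is to treat all faces adjacent to the edge simultaneously rather than choosing one: move to a point $y$ in the interior of an edge where $\gamma$ is locally a geodesic segment of that edge (which also sidesteps all your worries about vertices), place one candidate point $y_f$ in \emph{each} adjacent face, deformation retract $\overline{B}_X(y,\delta')\setminus P$ onto $\partial B_X(y,\delta') \cup \left(\gamma\cap B_X(y,\delta')\right)$ so that Lemma~\ref{simple_arc_lemma} gives $[\gamma]\neq 0$ in $H_1(X\setminus P)$, and then use the splitting $H_1(X\setminus P)\cong\bigoplus_{p\in P}H_1(X\setminus\set{p})$ to conclude that at least one $y_f$ lies in $\Int\gamma$. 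Your sketch contains no mechanism for selecting the correct face, and that selection is the essential content of this step.
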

\begin{proof}
Let $x \in \gamma$, and let $\delta > 0$ be given.  We will show that there is some point of $\Int \gamma$ in $B_X (x, \delta)$.  If some $y \in \gamma \cap B_X (x, \delta)$ lies on a face of $X$, \lemref{accumulation_lemma} gives us a point of $\Int \gamma$ in $B_X (x, \delta)$.  So suppose $\gamma \cap B_X (x, \delta)$ lies in the $1$-skeleton of $X$.  Then there is a $y \in \gamma \cap B_X (x, \delta)$ that lies on an edge of $X$, and a $\delta' \in (0, d(x,y))$ such that $B_X (y, \delta')$ contains no vertices of $X$ but $\gamma \cap B_X (y, \delta')$ is a geodesic segment along the edge.

For each face $X_f$ of $X$ that touches the edge, choose a point $y_f \in X_f$ such that $\frac{1}{2} \delta' < d(y_f, y) < \delta'$.  Let $P$ be the set of all points $y_f$.  Since $\overline{B}_X (y, \delta') \setminus P$ deformation retracts onto $\partial B_X (y, \delta') \cup \left( \gamma \cap B_X (y, \delta') \right)$, by \lemref{simple_arc_lemma} we have $[\gamma] \neq 0 \in H_1 (X \setminus P)$.
Now $P$ is discrete and closed by construction, and
$X$ is contractible and locally contractible, so
the inclusions $X \setminus P \to X \setminus \set{p}$ induce an isomorphism
\[\xymatrix{
H_* (X \setminus P) \ar[r]^-{\cong} & \bigoplus_{p \in P} H_* (X \setminus \set{p}).
}\]
Thus one of the points $y_f$ must lie in $\Int \gamma$.
\end{proof}

Our next goal is to prove a replacement property for openness of the interior $\Int \gamma$ of a simple closed curve $\gamma$ in $X$, to use when not on a face of $X$.
We prove $\Int \gamma$ has the geodesic extension property, in a local sense.

\begin{lemma}
\label{geodesic_extension_lemma}
Let $\gamma$ be a simple closed curve in $X$.  Then $\Int \gamma$ locally extends geodesics; that is, every geodesic $[p,q]_X \subset \Int \gamma$ extends to a geodesic $[p,q']_X \subset \Int \gamma$ with $[p,q]_X \subset [p,q']_X$.
\end{lemma}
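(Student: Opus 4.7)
The plan is to split on whether $q$ lies in the open interior of a face of $X$ or on the $1$-skeleton.

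If $q$ lies on an open face $X_f$, then Lemma \ref{open_face_lemma} furnishes $\epsilon > 0$ with $B_X(q,\epsilon) \cap X_f \subset \Int \gamma$. I would extend $[p,q]_X$ past $q$ by a short straight segment in $X_f$ continuing the incoming direction, ending at a point $q' \in B_X(q,\epsilon)$. Since $X_f$ has constant curvature $\kappa$, the concatenation is locally geodesic at $q$, hence a geodesic (local geodesics in a CAT($\kappa$) space with $\kappa \leq 0$ are geodesics), giving the desired $[p,q']_X \subset \Int \gamma$.

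If instead $q$ lies on the $1$-skeleton, I would work inside a conical neighborhood $U$ of $q$, identified with a neighborhood of the cone point in $C_\kappa(\Lk(q))$. Let $v \in \Lk(q)$ be the direction of $[p,q]$ at $q$. Extending $[p,q]$ past $q$ in $X$ amounts to producing an antipodal direction $v' \in \Lk(q)$ with $d_{\Lk(q)}(v,v')=\pi$; the extension is then the radial ray in $U$ from $q$ in direction $v'$. Existence of such a $v'$ is forced by $q \in \Int \gamma$: by excision and contractibility of $X$, $H_1(X \setminus \{q\}) \cong \tilde H_1(\Lk(q))$, which is nonzero, so $\Lk(q)$ contains a cycle, and the fact that $\Lk(q)$ is a CAT$(1)$ graph of girth $\geq 2\pi$ then yields a point at distance exactly $\pi$ from $v$ (by a continuity argument along the cycle). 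To ensure the radial extension in direction $v'$ lies in $\Int \gamma$, I would combine Lemma \ref{curve_theorem_1} (points of $\Int \gamma$ on faces accumulating at $q$) with Lemma \ref{open_face_lemma} (openness of $\Int \gamma$ on each face) to choose $v'$ inside the arc of $\Lk(q)$ corresponding to a face $F_2$ on which the $\Int \gamma$-open set reaches $q$ along this radial direction. A Mayer-Vietoris / deformation-retraction argument, modeled on those in Lemma \ref{open_face_lemma}, Lemma \ref{accumulation_lemma}, and Lemma \ref{curve_theorem_1}, then verifies $[\gamma] \neq 0 \in H_1(X \setminus \{q_t\})$ for each $q_t$ on the short extension, so $q_t \in \Int \gamma$.

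The main obstacle is this second case. Because $\Int \gamma$ is not open at points of the $1$-skeleton, the extension is not handed to us by a small ball, and the antipodal directions $v'$ are constrained (one per adjacent face in the edge case, and tied to the combinatorics of $\Lk(q)$ in the vertex case). One must show that \emph{some} $v'$ can be chosen so that the radial ray from $q$ in direction $v'$ lies in $\Int \gamma$ near $q$; this is the genuine content of the lemma, and it requires pairing the link structure at $q$ to the accumulation patterns of $\Int \gamma$ supplied by the earlier results, together with the homological comparisons already developed in those lemmas.
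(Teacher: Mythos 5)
Your Case 1 (when $q$ lies on an open face) is fine and matches the easy part of the situation: Lemma~\ref{open_face_lemma} gives a whole ball around $q$ inside $\Int \gamma$, and a local geodesic in a CAT(0) space is a geodesic. The gap is in Case 2, which you yourself flag as ``the genuine content of the lemma'' but do not actually carry out. Two specific problems. First, your mechanism for choosing the antipodal direction $v'$ is circular: you propose to pick $v'$ ``inside the arc of $\Lk(q)$ corresponding to a face on which the $\Int\gamma$-open set reaches $q$ along this radial direction,'' which presupposes exactly what must be proved, namely that there is an antipodal direction along which $\Int\gamma$ persists. Moreover, the accumulation input you cite, Lemma~\ref{curve_theorem_1}, is about accumulation of $\Int\gamma$ at points \emph{of $\gamma$}; here $q \in \Int\gamma$ and $\gamma$ is bounded away from $q$, so that lemma does not apply as stated. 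Second, the assertion that a ``Mayer--Vietoris / deformation-retraction argument, modeled on'' the earlier lemmas verifies $[\gamma] \neq 0 \in H_1(X\setminus\{q_t\})$ is precisely the technical heart of the matter, and it cannot be run for an arbitrary antipodal $v'$: it only works for a $v'$ that is homologically ``seen'' by $\gamma$.

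The paper resolves both issues at once with a single homological selection that your proposal is missing. Writing $\partial D$ for the link sphere at $q$ and $C = \{y \in \partial D : \angle_q(p,y) < \pi\}$, the link condition makes $C$ contractible, so one can choose a finite set $Q \subset \partial D \setminus C$ with $\partial D \setminus Q$ a union of trees; injectivity of $H_1(\partial D) \to \bigoplus_{q' \in Q} H_1(\partial D, \partial D \setminus \{q'\})$ then forces some $q' \in Q$ (hence with $\angle_q(p,q') = \pi$) at which the image of $[\gamma]$ is nonzero. That nonvanishing relative class is then transported, via excision through the conical neighborhood of $q'$ and a commutative diagram, to show $[\gamma] \neq 0$ in $H_1(X \setminus \{x\})$ for every $x$ on the open segment $(q,q')$, with $q' \in \Int\gamma$ following from closedness of $\gamma \cup \Int\gamma$. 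Without this simultaneous choice --- one $q'$ that is both antipodal to $v$ and carries a nonzero component of $f_*[\gamma]$ --- your two requirements on $v'$ are established separately and never shown to be compatible, so the proof does not go through. (Your side claim that a cycle in $\Lk(q)$ plus girth $\geq 2\pi$ yields, by continuity, a point at distance exactly $\pi$ from $v$ is also unjustified when $v$ does not lie on the cycle, but this is secondary to the main gap.)
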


\begin{proof}
Let $[p,q]_X \subset \Int \gamma$ be a geodesic in $X$.
Find $\epsilon > 0$ such that $D := B_X (q, \epsilon)$ is a conical neighborhood of $q$, and $\gamma \subset X \setminus D$.
Now $\overline{D} \setminus \set{q}$ deformation retracts onto $\partial D$, and $X$ deformation retracts onto $\overline{D}$ via geodesic projection, so pasting together gives a deformation retraction $f \colon X \setminus \set{q} \to \partial D$.

Let $C = \setp{y \in \partial D}{\angle_q (p, y) < \pi}$.
Since $X$ is locally finite, the topological boundary $P = \partial C$ in $\partial D$ is finite, so $\partial D \setminus P$ is a disjoint union of metric graphs.
By the link condition on $X$ \cite[p. 206]{bridson}, the component $C$ of $\partial D \setminus P$ is contractible, and there is a lower bound on the length of circles in $\partial D$; hence there exists a finite set $Q \subset \partial D \setminus C$ such that $\partial D \setminus Q$ is a disjoint union of trees.
The long exact sequence
\[\xymatrix{
\dotso \ar[r] & H_k (\partial D \setminus Q) \ar[r] & H_k (\partial D) \ar[r]^-{\varphi_k} & H_k (\partial D, \partial D \setminus Q) \ar[r] & \dotso
}\]
then has $H_k (\partial D \setminus Q) = 0$ for $k > 0$, so in particular the map 
\[\xymatrix{H_1 (\partial D) \ar[r]^-{\varphi_1} & H_1 (\partial D, \partial D \setminus Q)}\]
is injective.
Let $\set{V_y}_{y \in Q}$ be a collection of pairwise-disjoint open sets in $\partial D$ such that each $y \in V_y$.
Then we have a commutative diagram
\[\xymatrix{
H_1 (\coprod_{y \in Q} V_y, (\coprod_{y \in Q} V_y) \setminus Q) \ar[r] \ar[d]
& \bigoplus_{y \in Q} H_1 (V_y, V_y \setminus \set{y}) \ar[d] \\
H_1 (\partial D, \partial D \setminus Q) \ar[r]^-{\Psi}
& \bigoplus_{y \in Q} H_1 (\partial D, \partial D \setminus \set{y}),
}\]
where the vertical maps are induced by inclusion, the top map is the canonical isomorphism, and the bottom map is induced by the inclusions $\partial D \setminus Q \to \partial D \setminus \set{y}$. 
By excision, the vertical maps are isomorphisms, so $\Psi$ is an isomorphism.
Let $\Phi$ be the composition
\[\xymatrix{
H_1 (\partial D) \ar[r]^-{\varphi_1}
& H_1 (\partial D, \partial D \setminus Q) \ar[r]^-{\Psi}
& \bigoplus_{y \in Q} H_1 (\partial D, \partial D \setminus \set{y}).
}\]
Since the maps
$f_*$ 
and
$\Phi$ 
are both injective, and $[\gamma] \neq 0 \in H_1 (\partial D)$, there must be some $q' \in Q$ such that $(\phi \circ f_*) ([\gamma]) \neq 0$, where
\[\xymatrix{
\phi \colon H_1 (\partial D) \ar[r]
& H_1 (\partial D, \partial D \setminus \set{q'})
}\]
is the associated component function of $\Phi$.
(Hence by naturality of the long exact sequence, $\phi$ is the map in the long exact sequence of the pair $(\partial D, \partial D \setminus \set{q'})$.)

Since $q' \in \partial D \setminus C$, we have $\angle_q (p,q') = \pi$, and therefore $[q,q']_X$ extends the geodesic segment $[p,q]_X$ to $[p,q']_X$.
Thus it remains only to show $[q,q']_X \subset \Int \gamma$.

Define $W \subset D$ and $U \subset \partial D$ as follows.
If $q'$ lies on a face of $X$, then let $F$ be the open face containing $q'$, and let $W = F \cap D$ and $U = F \cap \partial D$.
On the other hand, if $q'$ lies on an edge of $X$, then let $G = [q,q']_X$, let $F$ be the union of all open faces containing $q'$ in their closure, and let $W = (F \cup G) \cap D$ and $U = (F \cup G) \cap \partial D$.
Now let $x \in [q, q']_X \setminus \set{q, q'}$ be arbitrary.
By construction, $W$ is a conical neighborhood of $q'$ in $X$, so we have a deformation retraction $g \colon X \setminus \set{x} \to \partial W$.
Also, $\partial W \setminus \set{q'}$ deformation retracts onto $\partial W \setminus U$, which deformation retracts onto $\set{q}$, being a cone over $q$.
Thus $\partial W \setminus \set{q'}$ is contractible, so the long exact sequence gives us an isomorphism
\[\xymatrix{
H_1 (\partial W) \ar[r]^-{\theta} & H_1 (\partial W, \partial W \setminus \set{q'}).
}\]
Let $\psi$ be the map that makes the following diagram commute, where the other two maps are the isomorphisms (by excision) induced by inclusion
\[\xymatrix{
&H_1 (U, U \setminus \set{q'})
\ar[dl] \ar[dr] \\
H_1 (\partial W, \partial W \setminus \set{q'}) \ar[rr]^{\psi}
&& H_1 (\partial D, \partial D \setminus \set{q'}).}\]
Thus we have isomorphisms
\[\xymatrix{H_1 (X \setminus \set{x}) \ar[r]^{g_*}
& H_1 (\partial W) \ar[d]^{\theta} \\
& H_1 (\partial W, \partial W \setminus \set{q'}) \ar[r]^{\psi}
& H_1 (\partial D, \partial D \setminus \set{q'}).
}\]
Let $\xymatrix@1{X \setminus D \ar[r]^-{i} & X \setminus \set{q}}$ and $\xymatrix@1{X \setminus D \ar[r]^-{j} & X \setminus \set{x}}$ be inclusion.
Then the square
\[\xymatrix{H_1 (X \setminus D) \ar[r]^-{j_*} \ar[dd]_{i_*} 
& H_1 (X \setminus \set{x}) \ar[r]^{g_*}
 & H_1 (\partial W) \ar[d]^{\theta} \\
&& H_1 (\partial W, \partial W \setminus \set{q'}) \ar[d]^{\psi} \\
H_1 (X \setminus \set{q}) \ar[r]^-{f_*}
& H_1 (\partial D) \ar[r]^-{\phi}
& H_1 (\partial D, \partial D \setminus \set{q'}),
}\]
commutes, and thus $q \in \Int \gamma$ implies $x \in \Int \gamma$ for all $x \in [q, q']_X \setminus \set{q, q'}$.
By compactness of $\gamma \cup \Int \gamma$, we obtain $q' \in \Int \gamma$, and the theorem is proved.
\end{proof}

Thus we can extend another feature of the Jordan Curve Theorem to $X$.

\begin{corollary}
\label{curve_theorem_2}
Let $\gamma$ be a simple closed curve in $X$.  Then $\Int \gamma$ extends geodesics to $\gamma$---that is, every geodesic $[p, q]_X \subset \Int \gamma$ extends to a geodesic $[p, q']_X \subset \gamma \cup \Int \gamma$ with $[p, q]_X \subset [p, q']_X$, $q' \in \gamma$, and $[p, q']_X \setminus \set{q'} \subset \Int \gamma$.
\end{corollary}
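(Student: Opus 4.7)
The plan is to extract a maximal extension by iterated application of \lemref{geodesic_extension_lemma}, organized via Zorn's lemma. First, I would introduce the poset $\s{P}$ whose elements are geodesic segments $[p,y]_X$ satisfying $[p,q]_X \subseteq [p,y]_X$, $[p,y]_X \subseteq \gamma \cup \Int \gamma$, and $[p,y]_X \setminus \set{y} \subseteq \Int \gamma$, ordered by inclusion. Since $[p,q]_X \subseteq \Int \gamma$ by hypothesis, the segment $[p,q]_X$ itself belongs to $\s{P}$, so $\s{P}$ is nonempty.

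To verify the chain condition of Zorn's lemma, I would note that, because $X$ is uniquely geodesic, a totally ordered chain in $\s{P}$ consists of nested initial subsegments of a single locally geodesic (hence geodesic) path emanating from $p$ through $q$. Let $L$ be the supremum of the lengths of the chain elements; this is finite by compactness of $\gamma \cup \Int \gamma$. The points on this common path at parameters $t < L$ form a Cauchy family, so they converge to some $y_\infty \in \gamma \cup \Int \gamma$ (closedness of this compact set), yielding a length-$L$ geodesic $[p,y_\infty]_X$ that contains every chain member. Each non-endpoint point of $[p,y_\infty]_X$ lies at distance strictly less than $L$ from $p$, and is therefore contained in some $[p,y_n]_X \setminus \set{y_n} \subseteq \Int \gamma$ from the chain. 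Hence $[p,y_\infty]_X \in \s{P}$ is an upper bound for the chain.

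By Zorn's lemma, $\s{P}$ has a maximal element $[p,q']_X$. If $q' \notin \gamma$, then $q' \in \Int \gamma$ (since $q' \in \gamma \cup \Int \gamma$), so the whole segment $[p,q']_X$ lies in $\Int \gamma$; then \lemref{geodesic_extension_lemma} produces a strict extension $[p,q'']_X \subseteq \Int \gamma$, which lies in $\s{P}$ and properly contains $[p,q']_X$, contradicting maximality. Therefore $q' \in \gamma$, which is exactly what the corollary asserts. The main subtlety I expect is the chain-bound step: one must check both that the limit segment $[p,y_\infty]_X$ is actually a geodesic in $X$ (via unique geodesics together with the local-to-global principle for CAT($\k$) paths) and that all of its non-endpoint points lie in $\Int \gamma$ (via the distance/cofinality observation). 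Everything else is a routine application of the preceding lemma.
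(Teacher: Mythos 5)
Your argument is correct and is essentially the paper's proof: both take a maximal extension of $[p,q]_X$ inside the compact set $\gamma \cup \Int \gamma$ and use Lemma~\ref{geodesic_extension_lemma} to force its endpoint onto $\gamma$; the paper realizes the maximal element via a supremum attained by compactness, while you use Zorn's lemma. The only other (minor) difference is that you build the condition $[p,y]_X \setminus \set{y} \subset \Int\gamma$ into the poset, whereas the paper secures it afterward by cutting the extended geodesic at the point of $[p,q']_X \cap \gamma$ closest to $p$.
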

\begin{proof}
Let
\(a' = \sup \setp{d (p, q')}{q' \in X \text{ such that } [p, q]_X \subset [p, q']_X \subset \gamma \cup \Int \gamma}\).
By compactness of $\gamma \cup \Int \gamma$, there is some $q' \in \gamma \cup \Int \gamma$ such that $d_X (p, q') = a'$ and $[p, q]_X \subset [p, q']_X \subset \gamma \cup \Int \gamma$.
By \lemref{geodesic_extension_lemma}, $q' \notin \Int \gamma$; hence $q' \in \gamma$.

Now, it is conceivable that $q'$ is not the only point of $\gamma$ on $[p,q']_X$.
If so, by compactness of $\gamma$ we may find the closest point $q''$ to $p$ on $[p, q']_X \cap \gamma$.
Then the conclusion of the lemma holds with $q''$ in place of $q'$.
\end{proof}

Combining \lemref{curve_theorem_1} and \corref{curve_theorem_2}, we obtain the following theorem.

\begin{theorem}[``The Curve Theorem'']\label{curve_theorem}
Let $\gamma$ be a simple closed curve in $X$.
Then $\Int \gamma$ accumulates on every point of $\gamma$.
Moreover, $\Int \gamma$ extends geodesics to $\gamma$.
\end{theorem}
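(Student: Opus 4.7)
The plan here is essentially bookkeeping: the statement is the conjunction of two results already proved in the immediately preceding material. The first assertion, that $\Int \gamma$ accumulates on every point of $\gamma$, is precisely \lemref{curve_theorem_1} (in fact a slightly stronger form is proved there, namely that one can accumulate using points of $\Int \gamma$ that lie on open faces of $X$). The second assertion, that $\Int \gamma$ extends geodesics to $\gamma$, is precisely \corref{curve_theorem_2}, which in turn packages \lemref{geodesic_extension_lemma} with a short compactness argument producing the first point $q' \in \gamma$ along the extended geodesic.

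Thus the proof I would write is a single sentence: combine \lemref{curve_theorem_1} and \corref{curve_theorem_2}. There is no genuine obstacle, because all of the real work—the Jordan curve / Schoenflies argument on a single face in \lemref{accumulation_lemma}, the $1$-skeleton case handled by a Mayer–Vietoris-type splitting in \lemref{curve_theorem_1}, and the link-condition / long-exact-sequence argument identifying a valid extension direction $q' \in Q$ in \lemref{geodesic_extension_lemma}—has already been carried out. The only rhetorical choice is whether to re-display the two conclusions before invoking them; for expository clarity I would briefly restate each as one sentence and then cite the corresponding lemma, rather than leave the reader to flip back. The theorem is stated here only to consolidate the two facts under the memorable name ``The Curve Theorem'' for convenient reference in the sequel (e.g., in \lemref{principal_lemma} and in the reduction to the planar case).
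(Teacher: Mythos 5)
Your proposal is correct and matches the paper exactly: the paper states the theorem with the single remark that it follows by combining \lemref{curve_theorem_1} and \corref{curve_theorem_2}, which is precisely your argument. Nothing is missing, since all the substantive work is indeed contained in those two results.
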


\section{Detecting intrinsic geodesics}

Let $E$ be a fixed closed, rectifiably-connected subspace of $X$ with the induced subspace metric (also written $d$), such that $H_1(E) = 0$.  Let $Y$ be the space $E$, endowed with the induced path metric, $d_Y$.

\begin{lemma}\label{complete and geodesic}
Let $Z$ be a complete, rectifiably-connected metric space.
Then $Z$, with the induced path metric, is complete and geodesic.
\end{lemma}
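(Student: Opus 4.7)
The plan is to handle completeness and the geodesic property separately, and both rest on standard tools: for completeness, a path-concatenation trick applied to a rapidly-Cauchy subsequence; for the geodesic property, Arzelà--Ascoli on uniformly length-bounded paths combined with lower semicontinuity of length.

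For completeness, let $(x_n)$ be a Cauchy sequence in $(Z, d_Z^\ell)$, where $d_Z^\ell$ denotes the induced path metric. Since $d \le d_Z^\ell$, the sequence is Cauchy in $(Z, d)$ and converges to some $x \in Z$ by completeness of $(Z,d)$. To upgrade this to convergence in $d_Z^\ell$, I would extract a subsequence $(x_{n_k})$ with $d_Z^\ell(x_{n_k}, x_{n_{k+1}}) < 2^{-k}$, choose a rectifiable path $\sigma_k$ from $x_{n_k}$ to $x_{n_{k+1}}$ of length less than $2^{-k+1}$, and concatenate to obtain a path $\sigma\colon [0,L) \to Z$ of total length $L < \infty$. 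Because $d \le d_Z^\ell$, the function $\sigma(t)$ is Cauchy as $t \to L^-$ in $(Z,d)$, so $\sigma$ extends continuously to $[0,L]$ with endpoint equal to $\lim x_{n_k} = x$. Reading off the tail lengths yields $d_Z^\ell(x_{n_k}, x) \to 0$, and since the full sequence is Cauchy in $d_Z^\ell$, it converges to $x$ as well.

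For the geodesic property, given $x, y \in Z$ with $d_Z^\ell(x,y) = D$, pick a sequence of rectifiable paths $\gamma_n$ from $x$ to $y$ of length $L_n \to D$, parametrized on $[0,1]$ at constant speed, so the $\gamma_n$ are uniformly $L_n$-Lipschitz (hence equicontinuous) with images in the closed $d_Z^\ell$-ball of radius $D+1$ about $x$. I would invoke Arzel\`a--Ascoli to extract a uniformly convergent subsequence and define $\gamma$ as the limit. Lower semicontinuity of length under uniform convergence then gives $\mathrm{length}(\gamma) \le \liminf L_n = D$, and since $d_Z^\ell(\gamma(0),\gamma(1)) = D$, the path $\gamma$ must have length exactly $D$ and so is a geodesic.

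The main obstacle is justifying the application of Arzel\`a--Ascoli: one needs the images of the $\gamma_n$ to land in a relatively compact subset, which does not follow from completeness of $(Z,d)$ alone. The standard remedy is Hopf--Rinow--Cohn-Vossen, which asserts that in a complete, locally compact length space closed bounded balls are compact. In the intended application $Z = E$ is a closed subset of the locally-finite polyhedral complex $X$, hence locally compact in $d$, and this is enough to run the argument. So modulo local compactness (present throughout this paper), the lemma is a packaging of the completeness argument above with classical Hopf--Rinow.
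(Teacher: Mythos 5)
Your completeness argument is correct and is the standard one (rapidly Cauchy subsequence, concatenate short paths, read off tail lengths); the paper itself offers no argument for this lemma, deferring entirely to Lemma 2.1 and Corollary 2.2 of \cite{planar}, so the question is whether your sketch stands on its own. The geodesic half is where there is a genuine gap, and it sits exactly at the point you flagged, but your patch does not work. Hopf--Rinow--Cohn-Vossen would have to be applied to the length space $(Z,d^\ell)$ (where $d^\ell$ is the induced path metric), and that requires $(Z,d^\ell)$ to be locally compact; local compactness of $Z=E$ in the ambient metric $d$ does not give this. Concretely, let $E\subset\R^2$ be the closed fan consisting of the segments from the origin to the points $(1,1/n)$, $n\ge 1$, together with the segment to $(1,0)$: it is closed and rectifiably connected, but in its path metric no neighborhood of the origin is compact, since points at intrinsic distance $\epsilon/2$ from the origin on distinct spokes are pairwise roughly $\epsilon$ apart. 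That space is nevertheless geodesic, which shows Hopf--Rinow is the wrong tool here rather than merely lacking a hypothesis.

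The repair is to run your Arzel\`a--Ascoli step in the ambient metric $d$ rather than in $d^\ell$: the constant-speed near-minimizing paths are Lipschitz for $d^\ell\ge d$, hence $d$-equicontinuous, and their images lie in a closed $d$-bounded subset of $Z$, which is $d$-compact when $(Z,d)$ is proper --- true in the intended application, where $E$ is closed in a locally finite complex, and this is precisely how the paper later uses properness in \lemref{continuous_variation}. Extract a uniform $d$-limit $\gamma$ (note the convergence is only in $d$, not in $d^\ell$; compare the remark after \lemref{continuous_variation}), use lower semicontinuity of $d$-length under uniform $d$-convergence, and then use the standard fact that the $d$-length and the $d^\ell$-length of a path in $Z$ coincide to conclude that $\gamma$, reparametrized by arc length, is a geodesic for $d^\ell$. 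You are also right that some properness-type input must genuinely enter somewhere: for an abstract complete, rectifiably connected space the geodesic conclusion can fail (join two points by disjoint arcs of length $1+1/n$ for every $n$), so the lemma as printed has to be read with the ambient properness of $E$ in mind, as in the cited results of \cite{planar}; your instinct about the missing compactness was correct, but the fix should go through properness of $(Z,d)$ and lengths measured in $d$, not through Hopf--Rinow for the intrinsic metric.
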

\begin{proof}
This is proved in Lemma 2.1 and Corollary 2.2 of \cite{planar}.
\end{proof}

Of course this implies $Y$ is complete and geodesic.

\begin{lemma}\label{prop1}
If the geodesic $[x,y]_X$ in $X$ between $x$ and $y$ satisfies $[x,y]_X \subset E$, then $[x,y]_X$ is the unique geodesic \underline{in $Y$} between $x$ and $y$.
\end{lemma}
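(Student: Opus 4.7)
The plan is to show that the $X$--geodesic $[x,y]_X$, which by hypothesis lies in $E$, realizes the infimum defining $d_Y(x,y)$, and then use uniqueness of geodesics in the ambient CAT(0) space to rule out any competitor inside $Y$.

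First I would compare the two metrics. Since $[x,y]_X \subset E$ is a rectifiable path in $E$ from $x$ to $y$ of $X$--length $d_X(x,y)$, the definition of the induced path metric gives $d_Y(x,y) \le d_X(x,y)$. Conversely, the subspace metric on $E$ agrees with $d_X$, so for any rectifiable $\alpha\colon[0,1]\to E$ from $x$ to $y$ the length of $\alpha$ computed in $E$ equals its length in $X$, and in particular is bounded below by $d_X(x,y)$. Taking the infimum over $\alpha$ yields $d_Y(x,y) \ge d_X(x,y)$. Thus $d_Y(x,y) = d_X(x,y)$, and the curve $[x,y]_X$, regarded as a path in $Y$, has $Y$--length at most its $X$--length $d_X(x,y) = d_Y(x,y)$; hence it is a geodesic in $Y$.

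For uniqueness, the key observation is that $Y$--length and $X$--length agree on rectifiable paths in $E$. Indeed, one direction is immediate from $d_Y \ge d_X$, and the other follows by taking, for each partition, the curve itself as a competitor in the infimum defining $d_Y$ between consecutive partition points. So if $\sigma\colon[0,L]\to Y$ is any geodesic in $Y$ from $x$ to $y$, then
\[
L_X(\sigma) \;=\; L_Y(\sigma) \;=\; d_Y(x,y) \;=\; d_X(x,y),
\]
which means $\sigma$ is an $X$--geodesic from $x$ to $y$. Since $X$ is CAT(0) and therefore uniquely geodesic, $\sigma = [x,y]_X$, completing the proof.

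No step poses a real obstacle; the only thing to watch is that lengths of curves lying in $E$ are unchanged when passing between the subspace metric and the induced path metric, which is exactly the two--inequality argument above. Everything else reduces to unwinding definitions and invoking unique geodesicity of $X$.
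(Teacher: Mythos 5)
Your argument is correct and is exactly the unwinding of definitions that the paper has in mind: its entire proof of this lemma is the remark that the result ``is immediate from the definitions,'' namely that $d_Y(x,y)=d_X(x,y)$ because $[x,y]_X$ lies in $E$, that lengths of paths in $E$ agree whether measured via the subspace metric or the induced path metric, and that unique geodesicity of the CAT(0) space $X$ then forces any $Y$-geodesic to coincide with $[x,y]_X$. So your proposal matches the paper's approach, just with the details written out.
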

\begin{proof}
This result is immediate from the definitions.
\end{proof}


\begin{lemma}\label{prop3}
Let $\gamma$ be a simple closed curve in $E$, and $\alpha$ be a subarc of $\gamma$ that is geodesic in $Y$.
If $x,y \in \alpha$ such that $[x,y]_X$ lies in $\gamma \cup \Int \gamma$, then $[x,y]_X \subset \alpha$.
\end{lemma}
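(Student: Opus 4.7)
The plan is to reduce the lemma to Lemma~\ref{prop1}: I will show that the $X$-geodesic $[x,y]_X$ actually lies in $E$, so that Lemma~\ref{prop1} identifies it with the unique $Y$-geodesic between $x$ and $y$, and then match this with the subarc of $\alpha$ from $x$ to $y$.

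The crucial step is to observe that $\gamma \cup \Int \gamma \subseteq E$. This exploits the standing hypothesis $H_1(E) = 0$: since $[\gamma] = 0 \in H_1(E)$, there exists a singular $2$-chain $c$ whose support lies in $E$ and with $\partial c = \gamma$. Pushing $c$ forward via the inclusion $E \into X$ produces a $2$-chain in $X$ with boundary $\gamma$ whose image is still contained in $E$. By the remark following the definition of $\Int \gamma$, the set $\gamma \cup \Int \gamma$ is contained in the image of every such $2$-chain. In particular, $\gamma \cup \Int \gamma \subseteq \operatorname{image}(c) \subseteq E$.

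From the hypothesis $[x,y]_X \subseteq \gamma \cup \Int \gamma$ it then follows that $[x,y]_X \subseteq E$. Lemma~\ref{prop1} applies and tells us that $[x,y]_X$ is the \emph{unique} geodesic in $Y$ between $x$ and $y$. On the other hand, the subarc $\alpha_{xy}$ of $\alpha$ from $x$ to $y$ is itself a $Y$-geodesic, since any sub-segment of a geodesic is a geodesic. Uniqueness forces $\alpha_{xy} = [x,y]_X$, which gives $[x,y]_X \subseteq \alpha$.

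There is no real obstacle in this argument; the heart of the proof is recognizing that $H_1(E) = 0$, combined with the $2$-chain characterization of $\gamma \cup \Int \gamma$, forces $\gamma \cup \Int \gamma$ to sit inside $E$. Once that inclusion is established, the conclusion is an immediate consequence of Lemma~\ref{prop1} together with the fact that restrictions of geodesics are geodesics.
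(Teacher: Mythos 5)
Your proof is correct and follows the same route as the paper: the paper's own argument is simply ``since $H_1(E)=0$, we have $\Int\gamma\subset E$; apply Lemma~\ref{prop1},'' and your write-up fills in exactly the details that are implicit there (the $2$-chain characterization of $\gamma\cup\Int\gamma$ giving $\gamma\cup\Int\gamma\subseteq E$, and uniqueness of $Y$-geodesics identifying $[x,y]_X$ with the subarc of $\alpha$). No gaps.
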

\begin{proof}
Since $H_1(E) = 0$, we know $\Int \gamma \subset E$.
Apply \lemref{prop1}. 
\end{proof}

\begin{lemma}
\label{principal_lemma}
Let $C$ be a closed convex subspace of $X$ and $\gamma$ be a simple closed curve in $E$.  Define $\dC \colon X \to \R$ by $\dC(p) = d(p, C)$.  Let $p$ be a point on $\gamma$ where $\dC$ attains its maximum on $\gamma$ but is not locally constant on $\gamma$.  Then $\gamma$ is not locally geodesic in $Y$ at the point $p$.
\end{lemma}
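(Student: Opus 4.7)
The plan is to argue by contradiction. Suppose $\gamma$ is locally geodesic in $Y$ at $p$, and fix a subarc $\alpha \subset \gamma$ containing $p$ in its relative interior such that $\alpha$ is a geodesic in $Y$. My overall goal is to show that $\alpha$ is in fact a local $X$-geodesic at every one of its points, which by the standard local-to-global principle in CAT(0) spaces promotes $\alpha$ to a global $X$-geodesic. Once this is in hand, the finish is immediate: $\dC$ is convex on $X$ as a distance function to a closed convex set in a CAT(0) space, so $\dC \circ \alpha$ is convex; since $p$ lies in the interior of $\alpha$ and $\dC(p) = \max_\gamma \dC$ bounds $\dC$ from above on $\alpha$, this convex function attains its maximum at an interior point and is therefore constant on $\alpha$, contradicting the hypothesis that $\dC$ is not locally constant on $\gamma$ at $p$.

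To make $\alpha$ locally $X$-geodesic at each $p' \in \alpha$, I would show the following: for every $u \in \gamma$ sufficiently close to $p'$, the geodesic $[p', u]_X$ lies in $\gamma \cup \Int \gamma$. Since $H_1(E) = 0$, every singular $2$-chain with boundary $\gamma$ has image in $E$, so $\gamma \cup \Int \gamma \subset E$. Restricting to $u \in \alpha$, \lemref{prop3} applied to the $Y$-geodesic subarc $\alpha$ forces $[p', u]_X \subset \alpha$, so $[p', u]_X$ must coincide as a set with the subarc of $\alpha$ from $p'$ to $u$. In particular, $d_X(p', u)$ equals the arc-length between these points, which is $d_Y(p', u)$---this is exactly the local $X$-geodesic condition at $p'$.

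To establish $[p', u]_X \subset \gamma \cup \Int \gamma$ for $u$ close to $p'$, I would appeal to the Curve Theorem. By the second assertion of \lemref{curve_theorem_1}, $\Int \gamma$ accumulates on $p'$ through points on faces of $X$; pick a sequence $z_n \in \Int \gamma$ on faces with $z_n \to p'$. By \lemref{open_face_lemma}, $\Int \gamma$ is open on each face, so arbitrarily short geodesic segments out of each $z_n$ remain in $\Int \gamma$, and \corref{curve_theorem_2} extends each such segment to an $X$-geodesic from $z_n$ into $\gamma \cup \Int \gamma$ ending at a point of $\gamma$. Working in a conical neighborhood of $p'$, I would verify that for $u \in \gamma$ sufficiently close to $p'$ and $n$ large, the $X$-geodesic $[z_n, u]_X$ is itself such an extension; passing to the limit $z_n \to p'$ and using that $\gamma \cup \Int \gamma$ is closed then gives $[p', u]_X \subset \gamma \cup \Int \gamma$. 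The hard part will be this local verification---ensuring that $[z_n, u]_X$ does not cross $\gamma$ before reaching $u$---which requires careful analysis of the link of $p'$ together with the nontrivial linking class of $\gamma$ around each $z_n \in \Int \gamma$.
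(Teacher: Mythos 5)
Your reduction has a genuine gap at its central step. The claim that drives everything---that for each $p' \in \alpha$ and each $u \in \gamma$ sufficiently close to $p'$ the geodesic $[p',u]_X$ lies in $\gamma \cup \Int \gamma$, so that $\alpha$ is locally an $X$-geodesic at \emph{every} one of its points---is false, and no amount of care in the ``hard part'' can rescue it. Take $X = M_0^2 = \R^2$ and $E = \setp{(x,y)}{y \le \abs{x}} \cap \overline{B}(0,10)$, which is closed, rectifiably connected, and simply connected. The $Y$-geodesic from $(-1,1)$ to $(1,1)$ is the path along the graph of $y = \abs{x}$ through the origin; it is locally geodesic in $Y$ at $(0,0)$ but has a genuine corner there as a path in $X$, and for $p' = (s,s)$, $u = (-t,t)$ the segment $[p',u]_X$ passes through $(0, \tfrac{2st}{s+t})$, which is outside $E$ and hence outside $\gamma \cup \Int\gamma$. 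So a $Y$-geodesic subarc of $\gamma$ need not be locally $X$-geodesic, and $X$-geodesics between nearby points of $\gamma$ need not stay in $\gamma \cup \Int\gamma$. A telltale sign of the problem is that your argument for this step makes no use of $C$ or of the hypothesis that $p$ maximizes $\dC$ on $\gamma$; if it worked, it would prove that every $Y$-geodesic in $E$ is an $X$-geodesic, which is absurd. (The maximality is exactly what rules out the corner configuration above: one checks that such a corner cannot occur at a global maximum of $\dC$ on $\gamma$, essentially because $\gamma$ would have to live in the convex sublevel set $\setp{x}{\dC(x) \le \dC(p)}$, which meets $E$ near $p$ only in a set that the corner disconnects.)

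The paper's proof is structured to avoid ever claiming that $\gamma$ is locally $X$-geodesic at $p$. Instead it takes interior points $p_k \to p$ on faces (via \lemref{curve_theorem_1}), runs through each $p_k$ a short $X$-geodesic $[x'_k,y'_k]_X \subset \Int\gamma$ perpendicular to the direction toward $C$ (using \lemref{open_face_lemma}), extends it within $\gamma \cup \Int\gamma$ to endpoints on $\gamma$ by \corref{curve_theorem_2}, and passes to a limit geodesic $[x,y]_X \ni p$. The contradiction then splits into three cases according to whether the limit degenerates at $p$ on zero, one, or two sides; only in the nondegenerate case does one get that $\gamma$ locally follows an $X$-geodesic and apply convexity of $\dC$ as you intend, while the degenerate cases---which correspond precisely to the corner behavior your approach cannot exclude---require a separate argument (a second extension construction, plus \lemref{prop3}). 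Your convexity endgame is the right idea for one of the three cases, but the reduction to it, as proposed, does not go through.
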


\begin{proof}
Suppose, by way of contradiction, that $V$ is a conical neighborhood of $p$ in $X$ such that $\gamma \cap V$ is geodesic.
By \lemref{curve_theorem_1}, there is a sequence $p_k \to p$ in $X$ such that each $p_k$ lies on a face of $X$ and $p_k \in \Int \gamma$.
For each $p_k$, let $r_k \in C$ be the closest point to $p_k$ in $C$, and let $x'_k, y'_k$ be points of $X \setminus \set{p_k}$ such that the Alexandrov angle in $X$ satisfies $\angle_{p_k}^X (x'_k,r_k) = \frac{\pi}{2} = \angle_{p_k}^X (y'_k,r_k)$ and $\angle_{p_k}^X (x'_k,y'_k) = \pi$.
By \lemref{open_face_lemma}, we may assume $x'_k, y'_k$ are near enough to $p_k$ that the geodesic $[x'_k,y'_k]_X$ in $X$ satisfies $[x'_k,y'_k]_X \subset \Int \gamma$.
By \corref{curve_theorem_2}, we can extend $[x'_k,y'_k]_X$ to a geodesic $[x_k,y_k]_X$ in $X$ such that $[x_k,y_k]_X \subset \gamma \cup \Int \gamma$ and $x_k,y_k \in \gamma$.
Taking the limit of the geodesics $[x_k,y_k]_X$ in $X$ as $k \to \infty$, we obtain a (possibly degenerate) geodesic $[x,y]_X$ in $X$; by compactness, $[x,y]_X \subset \gamma \cup \Int \gamma$ and $x,y \in \gamma$.
Since each $p_k \in [x_k,y_k]_X$, we also see that $p \in [x,y]_X$.

We consider three cases for $x$ and $y$.
Case 1:  $x = p = y$.
Then eventually $x_k, y_k \in V \cap \gamma$, contradicting \lemref{prop3}.
Case 2:  $x \neq p$ and $y \neq p$.
Then $\gamma$ locally follows the geodesic $[x,y]_X$ in $X$.
But now $p$ cannot be a local maximum for $\dC$ without forcing $\dC$ to be locally constant at $p$ because $\dC$ is convex in the CAT(0) space $X$.
So Cases 1 and 2 are impossible.

Finally, Case 3:  Exactly one of $x,y$ equals $p$.
We may assume $x \neq p$ and $y = p$.
Choose $w,q \in [x,p]_X \setminus \set{x,p}$ such that the geodesic $[w,p]_X$ in $X$ extends the geodesic $[w,q]_X$ in $X$.
By \lemref{curve_theorem_1}, there is a sequence $q_k \to q$ in $X$ such that each $q_k$ lies on a face of $X$ and $q_k \in \Int \gamma$.
Note that $[w,q_k]_X \setminus \set{w} \subset \Int \gamma$ for all sufficiently large $k \in \N$ because $V$ is a conical neighborhood of $p$ in $X$.
By \corref{curve_theorem_2}, we may extend each geodesic $[w,q_k]_X$ in $X$ to a geodesic $[w,z_k]_X$ in $X$ such that $z_k \in \gamma$ and $[w,z_k]_X \setminus \set{w,z_k} \subset \Int \gamma$.
Passing to a subsequence if necessary, we may assume $(z_k)$ converges in $X$ to some point $z \in \gamma$.
Notice $p \in [x,z]_X$ by simpleness of $\gamma$.
Since $[x,z]_X \subset \gamma \cup \Int \gamma$ by compactness, if $p \neq z$ then $\gamma$ is locally geodesic in $X$ at $p$, and we obtain a contradiction as in Case 2 above; on the other hand, if $p = z$ then eventually $z_k,w \in V \cap \gamma$, contradicting \lemref{prop3} as in Case 1 above.
Thus in every case, we obtain a contradiction.
The statement of the lemma follows.
\end{proof}

\begin{corollary}
$Y$ is uniquely geodesic.
\end{corollary}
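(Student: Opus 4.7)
The plan is to argue by contradiction. Suppose $Y$ admits two distinct $Y$-geodesics $\sigma_1, \sigma_2 \colon [0,L] \to Y$ between the same pair of points, and extract from them a \emph{bigon}: a simple closed curve $\gamma \subset E$ built from two $Y$-geodesic subarcs $\tilde\alpha, \tilde\beta$ sharing only their endpoints $P$ and $Q$. To make the curve genuinely simple, I would first pass to a maximal open interval $(a,b) \subset [0,L]$ on which $\sigma_1$ and $\sigma_2$ disagree. Because two distinct injective arcs of equal length with matching endpoints cannot have the same image, I can choose $t_* \in (a,b)$ with $\sigma_1(t_*) \notin \sigma_2([a,b])$ and set
\[\tilde a = \sup\setp{t \le t_*}{\sigma_1(t) \in \sigma_2([a,b])}, \qquad \tilde b = \inf\setp{t \ge t_*}{\sigma_1(t) \in \sigma_2([a,b])}.\]
Taking $\tilde\alpha = \sigma_1|_{[\tilde a, \tilde b]}$ and letting $\tilde\beta$ be the (possibly orientation-reversed) subarc of $\sigma_2$ joining $P = \sigma_1(\tilde a)$ to $Q = \sigma_1(\tilde b)$ produces a simple closed curve $\gamma = \tilde\alpha \cup \tilde\beta$ in $E$, since $\sigma_1((\tilde a, \tilde b))$ misses $\sigma_2([a,b])$ by construction. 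Both $\tilde\alpha$ and $\tilde\beta$ are $Y$-geodesic arcs, so $\gamma$ is locally $Y$-geodesic everywhere except possibly at the two ``corners'' $P$ and $Q$.

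With $\gamma$ in hand, I would apply \lemref{principal_lemma} with $C = [P,Q]_X$, a closed convex subset of the uniquely geodesic CAT(0) space $X$. The function $\dC$ vanishes at $P$ and $Q$ and is nonnegative on $\gamma$; moreover, since $\gamma$ is a simple closed curve while $[P,Q]_X$ is homeomorphic to an interval, $\gamma \not\subset [P,Q]_X$ and $\max_\gamma \dC > 0$. The max set $M = \setp{r \in \gamma}{\dC(r) = \max_\gamma \dC}$ therefore avoids $\set{P, Q}$, so every $r \in M$ is an interior point of $\tilde\alpha$ or $\tilde\beta$, where $\gamma$ is locally $Y$-geodesic. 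The contrapositive of \lemref{principal_lemma} then forces $\dC$ to be locally constant on $\gamma$ at every $r \in M$, making $M$ open in $\gamma$. Since $M$ is also closed and nonempty while omitting $P$, this contradicts the connectedness of $\gamma$.

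I expect the main obstacle to be the bigon-extraction step: two distinct $Y$-geodesics with common endpoints can meet and separate repeatedly, so some inf/sup bookkeeping is needed to isolate a subregion whose two bounding arcs meet only at their shared endpoints. Once the bigon is produced, the choice $C = [P,Q]_X$ does the rest of the work, because it pushes both corners of $\gamma$ into the minimum set rather than the maximum set of $\dC$, guaranteeing that \lemref{principal_lemma} applies at every point where the maximum on $\gamma$ is realized.
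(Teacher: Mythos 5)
Your proof is correct and takes essentially the same route as the paper: reduce to a bigon (a simple closed curve formed by two $Y$-geodesic arcs meeting only at their endpoints), set $C$ equal to the $X$-geodesic between the two corners, and invoke \lemref{principal_lemma}. You simply spell out two steps the paper leaves implicit---the careful extraction of the simple bigon and the open-closed/connectedness argument showing the maximum of $\dC$ cannot be attained only at locally geodesic points---and both are handled correctly.
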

\begin{proof}
Suppose, by way of contradiction, that $\sigma$ and $\tau$ are distinct geodesics in $Y$ from $x \in Y$ to $y \in Y$.  We may assume $\sigma$ and $\tau$ intersect only at $x$ and $y$.  Put $C = [x,y]_X$ and $\gamma = \sigma \cup \tau$.  By \lemref{principal_lemma}, one of $\sigma$ and $\tau$ is not locally geodesic at some point (away from $x$ and $y$), contradicting our hypothesis on $\sigma$ and $\tau$.  Therefore, $Y$ is uniquely geodesic.
\end{proof}

\section{Bootstrapping the planar case}

In the proof of \cite[Theorem B]{planar}, the essential construction is that of two unique (though not necessarily distinct) geodesic segments we call ``limit segments,'' based at a given vertex, $p$, of a simple geodesic triangle $T$, with two important features:
\begin{enumerate}
\item
The limit segments extend through $T \cup \Int T$ to the opposite side of $T$.
\item
The angle between the limit segments equals the Alexandrov angle of the triangle at $p$ (this is the content of \cite[Theorem B]{planar}).
\end{enumerate}
We will follow the same general outline, creating limit segments and proving that the angle between them equals the Alexandrov angle of the triangle at the vertex.  The first part of the proof is to reduce to the case where \cite[Theorem B]{planar} applies.

We begin with some terminology.

\begin{definition}
A geodesic triangle $T$ in $Y$ is called \defn{simple} if it is a simple closed curve in $Y$ (or, equivalently, in $X$).
\end{definition}

\begin{definition}
Let $p$ and $q$ be distinct points in $Y$, and let $\sigma \colon [0,1] \to Y$ be a constant-speed geodesic with $\sigma(0) = p$ and $\sigma(1) = q$.  Let $\epsilon > 0$ be small enough that $B_X (p, 2\epsilon)$ is a conical neighborhood of $p$ in $X$.  For $t \in (0,1]$, let $R_q^{\epsilon,t}$ be the geodesic from $p$ through $\sigma(t)$ in $X$ of length $\epsilon$.  If the geodesics $R_q^{\epsilon,t}$ limit uniformly onto a geodesic $R_q^{\epsilon}$ in $X$, we call $R_q^{\epsilon}$ the \defn{limit segment} of $\sigma$ at $p$ (of length $\epsilon$).
\end{definition}

Since we will be talking about angles in both $X$ and $Y$, we will distinguish between them by placing the space as a superscript, as follows.

\begin{definition}
Let $Z$ be a metric space.  We will write $\angle_p^Z (q, r)$ for the Alexandrov angle $\angle_p (q, r)$ in $Z$ and $\angle_p^{(\k), Z} (q, r)$ for the $M_\k^2$-comparison angle $\angle_p^{(\k)} (q, r)$ in $Z$.
\end{definition}

We restate \cite[Theorem B]{planar} in the current context.

\begin{theorem}
[Theorem B of \cite{planar}]
\label{previous_paper_angles_theorem}
Assume $X = M_\k^2$ and $E$ is simply connected.
Let $T$ be a simple geodesic triangle in $Y$ with vertices $p$, $q$, and $r$.
For all sufficiently small $\epsilon > 0$, both limit segments $R_q^\epsilon$ and $R_r^\epsilon$ of $T$ exist at $p$, both $R_q^\epsilon$ and $R_r^\epsilon$ lie completely in $T \cup \Int T$, and
\[\angle_p^Y (q, r) = \angle_p^X (R_q^\epsilon, R_r^\epsilon).\]
\end{theorem}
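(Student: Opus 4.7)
The plan is to observe that under the hypotheses $X = M_\k^2$ and $E$ simply connected, we are exactly in the setting of \cite{planar}: $E$ is a closed, simply-connected, rectifiably-connected subset of the $\k$-plane, and $Y$ is $E$ with its induced path metric. The theorem is then a direct translation of \cite[Theorem B]{planar} into the terminology of the present paper, so the proof reduces to verifying that the objects named in the statement correspond to their counterparts in \cite{planar}.

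First I would check that the notion of limit segment used here agrees with the one in \cite{planar}: in both cases, $R_q^{\epsilon,t}$ is the constant-speed geodesic in the ambient $\k$-plane of length $\epsilon$ starting at $p$ and passing through $\sigma(t)$, and $R_q^\epsilon$ is its uniform limit as $t \to 0^+$ (when that limit exists). Since $X = M_\k^2$ is uniquely geodesic and the conical neighborhood condition trivializes, the construction is identical to that of \cite{planar}. Second, I would verify that ``simple geodesic triangle'' as defined here (a simple closed curve in $Y$, or equivalently in $X$, built from three $Y$-geodesic sides) matches the object appearing in \cite[Theorem B]{planar}.

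Third, and most substantively, I would verify that the set $T \cup \Int T$ in the current sense (where $\Int T$ is defined via $H_1(X \setminus \{x\})$) coincides with the topological interior used in \cite{planar}. Here this is routine: since $X = M_\k^2$ is homeomorphic to $\R^2$ and $T$ is a Jordan curve in $X$, the classical Jordan Curve Theorem identifies $\Int T$ with the bounded component of $X \setminus T$, and the simple connectivity of $E$ together with $T \subset E$ forces this bounded component to lie inside $E$. Hence the containment statements $R_q^\epsilon, R_r^\epsilon \subset T \cup \Int T$ translate without change.

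Once these identifications are made, the existence of the limit segments, their containment in $T \cup \Int T$, and the angle equality $\angle_p^Y(q,r) = \angle_p^X(R_q^\epsilon, R_r^\epsilon)$ all follow by directly quoting \cite[Theorem B]{planar}. The only even mildly nontrivial step is the identification of $\Int T$ with the Jordan interior, and that is essentially a consequence of the planar setting plus simple connectivity of $E$; no new geometric argument is needed in this section.
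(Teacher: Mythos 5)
Your proposal is correct and matches what the paper does: the paper gives no independent proof here, it simply restates \cite[Theorem B]{planar} in the present notation, which is exactly the citation-plus-translation argument you describe. Your additional checks (that the limit-segment construction agrees, and that the homological $\Int T$ coincides with the Jordan interior, which lies in $E$ by simple connectivity) are the right routine verifications and raise no issues.
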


We would like to use this theorem to prove the general case, but first we have to properly transfer the setting.

\begin{lemma}\label{convexity_lemma}
Let $C$ be a closed, convex subspace of $X$.
Let $A$ be a path-connected metric subspace of $E$, and let $B$ be the union of all geodesics $[x,y]_Y$ in $Y$ between all points $x,y \in A$, taken as a metric subspace of $E$.
If $A \subset C$, then $B \subset C$. 
\end{lemma}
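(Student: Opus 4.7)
The plan is to argue by contradiction: suppose there exist $x, y \in A$ such that the unique $Y$-geodesic $\sigma = [x, y]_Y$ leaves $C$. The strategy is to exhibit a simple closed curve $\gamma$ in $E$ on which $\dC$ attains its maximum at an interior point $p$ of $\sigma$, where $\gamma$ locally coincides with $\sigma$ and so is locally a $Y$-geodesic at $p$, in direct contradiction to \lemref{principal_lemma}.

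Parametrize $\sigma \colon [0, L] \to Y$ by arclength. Let $M = \max_t \dC(\sigma(t)) > 0$, let $t_0$ be the least $t$ at which $\dC(\sigma(t)) = M$, and set $p = \sigma(t_0)$. Since $x, y \in A \subset C$ we have $t_0 \in (0, L)$ and $\dC(\sigma(t)) < M$ for every $t < t_0$. Let $(a, b) \ni t_0$ denote the connected component of $\{t \in [0, L] : \dC(\sigma(t)) > M/2\}$ containing $t_0$; then $0 < a < t_0 < b < L$.

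Using path-connectedness of $A$, pick any path $\beta$ in $A$ from $x$ to $y$ and form the concatenation $\rho = \sigma \rvert_{[b, L]} * \beta^{-1} * \sigma \rvert_{[0, a]}$, a path in $E$ from $\sigma(b)$ to $\sigma(a)$. The open subarc $\sigma((a, b))$ is disjoint from the image of $\rho$: by injectivity of $\sigma$ it misses $\sigma([0, a]) \cup \sigma([b, L])$, and since $\sigma((a, b)) \subset \{\dC > M/2\}$ while $\beta \subset A \subset C = \{\dC = 0\}$, it misses $\beta$ as well. By the standard fact that any path in a metric space contains a simple arc between its endpoints, extract a simple arc $\rho'$ in the image of $\rho$ from $\sigma(b)$ to $\sigma(a)$, and set $\gamma = \sigma \rvert_{[a, b]} * \rho'$; this is a simple closed curve in $E$.

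On $\gamma$ we have $\dC \le M$: on $\sigma \rvert_{[a, b]}$ by choice of $M$, and on $\rho' \subset \sigma([0, L]) \cup \beta$ since $\dC \le M$ on $\sigma$ and $\dC = 0$ on $\beta$. Because $\dC(p) = M$, the point $p$ maximizes $\dC$ on $\gamma$. The openness of $(a, b)$ ensures that $\gamma$ agrees with $\sigma$ in a neighborhood of $p$, so $\gamma$ is locally a $Y$-geodesic at $p$; and $\dC$ is not locally constant on $\gamma$ at $p$ because $\dC(\sigma(t)) < M$ for $t$ just below $t_0$. Thus \lemref{principal_lemma} applied to $(\gamma, C, p)$ yields the contradiction, so $[x, y]_Y \subset C$ for all $x, y \in A$, which is the desired conclusion $B \subset C$. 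The main obstacle is ensuring $\gamma$ is simple while $p$ remains the $\dC$-maximum: the first uses both the injectivity of $\sigma$ and the disjointness of $\{\dC > M/2\}$ from $C$, and the second relies on the closing arc traversing only $\sigma$ and $\beta \subset C$.
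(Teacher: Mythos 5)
Your proof is correct and follows essentially the same route as the paper: argue by contradiction, close up the offending $Y$-geodesic with a path through $A\subset C$ to form a simple closed curve in $E$, and apply Lemma~\ref{convexity_lemma}'s key ingredient, Lemma~\ref{principal_lemma}, at the point maximizing $\dC$. Your truncation at the level $M/2$ and extraction of a simple arc is just a more explicit way of guaranteeing simplicity than the paper's reduction to $[x,y]_Y\cap C=\set{x,y}$ with an injective path in $A$.
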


\begin{proof}
First suppose, by way of contradiction, that $A \subset C$ but there is a point $z \in B \setminus C$.
By definition of $B$, we know $z$ lies on a geodesic $[x,y]_Y$ in $Y$ for some $x,y \in A$.
We may assume that $[x,y]_Y \cap C = \set{x,y}$.
Since $A$ is path connected, we may find a path in $A \subset C \cap E$ joining $x$ and $y$; we may of course assume this path is injective.
Concatenating this path with the geodesic $[x,y]_Y$, we obtain a simple closed curve in $E$.
By \lemref{principal_lemma}, $[x,y]_Y$ must stay in $C$ or it cannot be a geodesic in $Y$; this contradicts our choice of $[x,y]_Y$.
Therefore, we must have $B \subset C$.
\end{proof}

\begin{lemma}\label{disks_are_self_contained}
Let $D$ be a closed, convex subspace of $X$ which isometrically embeds in $M_\k^2$.
Let $T$ be a simple geodesic triangle in $Y$ with vertices $p,q,r \in D$.
Then $T \cup \Int T \subset D \cap E$ and is compact, simply connected, and rectifiably connected as a subspace of $D$, and convex as a subspace of $Y$.
\end{lemma}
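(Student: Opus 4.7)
The plan is to establish the four claims in sequence: first $T \subset D$, then $T \cup \Int T \subset D \cap E$ (with compactness), then the identification of $T \cup \Int T$ with the closed topological disk bounded by $T$ in the plane $D$, and finally convexity in $Y$.

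To show $T \subset D$, apply \lemref{principal_lemma} with $C = D$ and $\gamma = T$. The function $\dC$ vanishes at the vertices $p, q, r$. If $\dC$ attained a positive maximum $M$ on $T$, then the closed level set $\set{y \in T : \dC(y) = M}$ would be clopen (and hence equal to all of $T$) whenever $\dC$ is locally constant at each of its points; since $\dC = 0$ at the vertices rules out this possibility, there must be a max point at which $\dC$ is not locally constant. This point lies in the interior of a side of $T$, where $T$ is locally geodesic in $Y$, so \lemref{principal_lemma} yields a contradiction. Hence $T \subset D$. Next, $D$ is contractible (being convex in the CAT(0) space $X$), so $H_1(D) = 0$; the loop $T \subset D$ bounds a singular 2-chain in $D \subset X$, and the remark in the definition of $\Int$ then gives $T \cup \Int T \subset D$ and compactness. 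The same argument with $H_1(E) = 0$ gives $T \cup \Int T \subset E$.

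The core step is to identify $T \cup \Int T$ with the closed topological disk $D_T$ bounded by $T$ in $D$ (such a $D_T$ exists and lies in $D$ by Jordan--Schoenflies in the plane $M_\k^2$, using convexity of $D$ there to keep the Jordan interior of $T \subset D$ inside $D$). The inclusion $T \cup \Int T \subseteq D_T$ is immediate since $D_T$ is a 2-chain in $D \subset X$ bounding $T$. For the reverse, given $x \in D_T^\circ$, we must show $[T] \neq 0 \in H_1(X \setminus \set{x})$. Use the 1-Lipschitz nearest-point retraction $\pi \colon X \to D$: since $\pi$ is the identity on $D$, we have $\pi^{-1}(x) \cap D = \set{x}$, and $\pi$ restricts to a retraction $X \setminus \pi^{-1}(x) \to D \setminus \set{x}$. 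Jordan in $D$ gives $[T] \neq 0 \in H_1(D \setminus \set{x})$, so the retraction transfers this nontriviality to $H_1(X \setminus \pi^{-1}(x))$. The final transfer to $H_1(X \setminus \set{x})$ uses the conical neighborhood of $x$ in $X$ together with excision. Once $T \cup \Int T = D_T$ is established, it is a closed topological disk in the plane $D$, which is simply connected and rectifiably connected as a subspace.

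For convexity in $Y$, apply \lemref{convexity_lemma} with $A = T \cup \Int T = D_T$ (path-connected, in $E$) and $C = D$, obtaining that every $Y$-geodesic between points of $D_T$ lies in $D$. If such a geodesic $[x,y]_Y$ exited $D_T$, it would have a subarc $\alpha$ with endpoints $x', y' \in T$ and interior in $D \setminus D_T$, itself a $Y$-geodesic. The $X$-geodesic $[x', y']_X$ lies in $D$ by convexity; if $[x', y']_X \subset D_T \subset E$, then \lemref{prop1} identifies $[x', y']_X$ with the unique $Y$-geodesic from $x'$ to $y'$, contradicting the location of $\alpha$; if $[x', y']_X$ also exits $D_T$, apply \lemref{prop3} to the simple closed curve $\gamma = \alpha \ast \beta^{-1}$ (where $\beta$ is a subarc of $T$ from $y'$ to $x'$), using that $\gamma \cup \Int \gamma$ lies on the $D \setminus D_T$ side of $T$ in $D$ to derive a contradiction. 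The main obstacle is the reverse inclusion in the disk identification: when $\pi^{-1}(x)$ is strictly larger than $\set{x}$ (which can occur when $x$ lies on the topological boundary of $D$ in $X$), transferring the nontriviality of $[T]$ from $H_1(X \setminus \pi^{-1}(x))$ to $H_1(X \setminus \set{x})$ requires careful local analysis. The convexity step is likewise delicate since $D_T$ is not convex in $X$, so \lemref{convexity_lemma} must be combined with planar Jordan-type analysis via \lemref{prop3}.
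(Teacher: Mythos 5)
Your proposal follows the same skeleton as the paper's proof: confine $T$ to a closed convex set via \lemref{principal_lemma} applied at a maximum of the distance function where it is not locally constant, deduce $T \cup \Int T \subset D$ and compactness from the $2$-chain remark in the definition of $\Int$, get $\Int T \subset E$ from $H_1(E)=0$, identify $T \cup \Int T$ with the closed planar Jordan domain, and finish with \lemref{convexity_lemma} plus planar reasoning for convexity in $Y$. (The paper works inside the convex hull $C$ of $\set{p,q,r}$ rather than inside $D$, which only matters in the last step.) The first two stages of your argument are sound and essentially identical to the paper's.

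The genuine gap is in your ``core step,'' and you flag it yourself. The nearest-point retraction does give $[T]\neq 0$ in $H_1(X\setminus \pi^{-1}(x))$, but what you need is nontriviality in $H_1(X\setminus\set{x})$, and the relevant map is induced by the inclusion $X\setminus\pi^{-1}(x)\hookrightarrow X\setminus\set{x}$ --- nontriviality does not push forward along an inclusion-induced map. When $x$ lies on the $1$-skeleton, $\pi^{-1}(x)$ really is a nontrivial cone of extra faces, and ``conical neighborhood together with excision'' is not yet an argument; as written the step does not close. A route that does work: let $D_T$ be the closed Jordan domain, a $2$-chain in $D$ with boundary $T$. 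The class $[D_T]\in H_2(D_T,T)$ maps to a generator of $H_2(D_T,D_T\setminus\set{x})\cong\Z$, and the map $H_2(D_T,D_T\setminus\set{x})\to H_2(X,X\setminus\set{x})$ is injective because, by excision and the cone structure at $x$, it is identified with $H_1(S)\to H_1(L)$ for $S$ the link circle of $x$ in the planar piece inside the full link graph $L$ of $x$ in $X$, and a subgraph inclusion of graphs is always injective on $H_1$. Since $H_2(X)=0$, the connecting map $H_2(X,X\setminus\set{x})\to H_1(X\setminus\set{x})$ is injective, and naturality gives $[T]=\partial[D_T]\neq 0$ in $H_1(X\setminus\set{x})$. (To be fair, the paper simply asserts $\varphi(\Int T)=\Int\varphi(T)$ without proof, so you are attempting more than it does; but your mechanism is the wrong one.) A secondary soft spot: in your convexity argument, case (b) invokes \lemref{prop3}, whose hypothesis is that $[x',y']_X\subset\gamma\cup\Int\gamma$ --- you have not verified this, and it is not clear how you would. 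The paper instead confines everything to the convex hull $C$ of the three vertices and argues that a $Y$-geodesic excursion out of $D_T$ inside $C$ would contradict the sides of $T$ being geodesics in $Y$.
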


\begin{proof}
Let $C$ be the convex hull of $\set{p,q,r}$ in $X$; note that $C$ is closed.  By \lemref{principal_lemma}, $T$ must lie in $C$.  Now $C$, as a convex subspace of a CAT(0) space, is contractible.  Therefore, there is a singular 2-chain in $C$ with boundary $T$; hence $T \cup \Int T$, being the intersection of 
all singular 2-chains with boundary $T$, must lie in $C$.  Thus $T \cup \Int T \subset C \subset D$.  Let $\varphi \colon D \to M_\k^2$ be an isometric embedding. 
Note that $\varphi(C)$ is the convex hull of $\set{\varphi(p), \varphi(q), \varphi(r)}$ in $M_\k^2$, and $\varphi(\Int T) = \Int \varphi(T)$ is the interior of $\varphi(T)$ guaranteed by the Jordan Curve Theorem in $M_\k^2$.
In particular, $T \cup \Int T$, as a metric subspace of $X$, is topologically a closed disk by Schoenflies.

Let $F_0 = T \cup \Int T$ as a metric subspace of $X$.
Since $H_1(E) = 0$, we know $\Int T \subset E$ and therefore $F_0 \subset E$.
Let $Z_0 = F_0$ as a metric subspace of $Y$.
Let $Z_1$ be the union of all geodesics $[x,y]_Y$ in $Y$ between all points $x,y \in Z_0$, with $Z_1$ taken as a metric subspace of $Y$.
Let $F_1 = Z_1$ as a metric subspace of $X$.
Since $F_0$ is topologically a closed disk, $F_0$ is path connected.
Thus $F_1 \subset C$ by \lemref{convexity_lemma}.

But now $F_0 \subset F_1 \subset C \subset D$.
Since $C$ is the convex hull of the vertices of $T$ in a disk of $M_\k^2$, no geodesic $[x,y]_Y$ in $Y$ can stay in $C$ and yet leave $F_0$ without violating the hypothesis that the sides of $T$ are geodesics in $Y$.
Thus $F_1 = F_0$, i.e.~ $Z_0$ is convex.
This implies $F_0$ is rectifiably connected, and the lemma is proved.
\end{proof}

The next lemma gives some surprising teeth to \lemref{disks_are_self_contained}.

\begin{lemma}\label{small_simple_triangles_are_planar}
Let $p \in Y$, and let $V$ be a conical neighborhood of $p$ in $X$.
Every simple geodesic triangle in $Y$ with one vertex $p$ and other two vertices $q,r \in V$ lies inside a closed, convex subspace $C$ of $X$ that isometrically embeds in $M_\k^2$.
\end{lemma}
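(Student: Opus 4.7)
I plan to take $C$ to be the closed convex hull of $\{p,q,r\}$ in $X$ and prove (i) $T \subseteq C$ and (ii) $C$ admits an isometric embedding into $M_\k^2$. Since closed metric balls are convex in any CAT(\k) space with $\k \le 0$, the closed conical neighborhood $\overline V$ is itself a closed convex subset of $X$ containing $\{p,q,r\}$; hence $C \subseteq \overline V$, and the entire analysis is carried out inside a \k-cone on $L := \Lk(p,X)$ with cone point $p$.

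For (i), I would apply \lemref{principal_lemma} to the distance function $\dC$. The restriction $\dC|_T$ attains a maximum $M \ge 0$; the vertices $p,q,r$ lie in $C$, so if $M > 0$ this maximum is realized at some $x^* \in T$ in the interior of a geodesic edge, where $T$ is locally geodesic in $Y$. The contrapositive of \lemref{principal_lemma} then forces $\dC$ to be locally constant on $T$ near $x^*$, so $\{x \in T : \dC(x) = M\}$ is simultaneously open (by local constancy at each of its points) and closed (by continuity). Because $T$ is a connected simple closed curve, this level set equals all of $T$, contradicting $\dC(p) = 0$. Hence $M = 0$ and $T \subseteq C$.

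For (ii) -- which I expect to be the more subtle step and where the simplicity of $T$ really bites -- I would use the cone model for $\overline V$. Let $\xi_0,\eta_0 \in L$ denote the initial directions in $X$ of the radial segments $[p,q]_X$ and $[p,r]_X$, so $q$ and $r$ have angular coordinates $\xi_0$ and $\eta_0$ respectively. If $d_L(\xi_0,\eta_0) < \pi$, then the sector of angular width $d_L(\xi_0,\eta_0)$ between $\xi_0$ and $\eta_0$ is convex in the cone and isometric to a planar sector in $M_\k^2$; $C$ then reduces to the enclosed planar triangle with vertices $p,q,r$, which embeds isometrically in $M_\k^2$. If instead $d_L(\xi_0,\eta_0) \ge \pi$, the cone geodesic $[q,r]_X$ passes through $p$, so $C$ collapses to $[p,q]_X \cup [p,r]_X$; the containment $T \subseteq C$ from step (i) would then put the edge $[q,r]_Y$ of $T$ inside this wedge, forcing it through $p$ and violating simplicity of $T$. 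Thus only the planar case can occur, and $C$ is the desired subspace.
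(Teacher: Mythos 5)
Your proposal is correct and follows essentially the same route as the paper: take the (closed) convex hull $C$ of $\set{p,q,r}$, use \lemref{principal_lemma} to force $T \subset C$, rule out the case $\angle_p^X(q,r) = \pi$ (equivalently $d_L(\xi_0,\eta_0) \ge \pi$) via simplicity of $T$, and embed everything in $M_\k^2$ through the subcone over the link geodesic from $\xi_0$ to $\eta_0$ inside the conical neighborhood. The only quibble is your justification that $\overline V$ is convex: a conical neighborhood need not be a metric ball, but convexity follows directly from its definition as (isometric to) a convex open set in the $\k$-cone over the link, so the containment $C \subseteq \overline V$ and the rest of the argument stand.
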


\begin{proof}
Let $q,r$ be points of $E \cap V$ such that the geodesic triangle $T$ in $Y$ with vertices $p,q,r$ is simple.
Let $C$ be the convex hull of $\set{p,q,r}$ in $X$; note that $C$ is closed and $C \subset V$.
By \lemref{principal_lemma}, $T$ must lie in $C$.
Now if $\angle_p^X (q,r) = \pi$, then $C$ is the geodesic $[q,r]_X$ in $X$; however, this means $T \subset C$ cannot be simple, contradicting our hypothesis.
Thus $\angle_p^X (q,r) < \pi$.

Let $\rho$ be radial projection in $X \setminus \set{p}$ to the link of $p$ in $X$, and let $G$ be the geodesic in the link between $\rho(q)$ and $\rho(r)$.
Let $D = (\rho^{-1} (G) \cap V) \cup \set{p}$.
Since $V$ is a conical neighborhood about $p$, it follows that $D$ is convex.
Thus $C \subset D$; since $D$ isometrically embeds in $M_\k^2$, so does $C$.
\end{proof}

\begin{corollary}\label{small_simple_limit_segments}
Let $p \in Y$, and let $V$ be a conical neighborhood of $p$ in $X$.
Let $T$ be a simple geodesic triangle in $Y$ with one vertex $p$ and other two vertices $q,r \in V$.
There exists $\epsilon > 0$ such that both limit segments $R_q^\epsilon$ and $R_r^\epsilon$ of $T$ exist at $p$, both $R_q^\epsilon$ and $R_r^\epsilon$ lie completely in $T \cup \Int T$, and
\[\angle_p^Y (q, r) = \angle_p^X (R_q^\epsilon, R_r^\epsilon).\]
\end{corollary}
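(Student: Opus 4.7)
The plan is to reduce the corollary to the planar statement \thmref{previous_paper_angles_theorem} by transporting the whole configuration through the isometric embedding produced by \lemref{small_simple_triangles_are_planar}, using the convexity provided by \lemref{disks_are_self_contained} to make the path metrics match up. Specifically, \lemref{small_simple_triangles_are_planar} (applied to $q,r \in V$) yields a closed convex $C \subset V$ in $X$ containing $\set{p,q,r}$ and an isometric embedding $\varphi \colon C \to M_\k^2$. Applying \lemref{disks_are_self_contained} with $D := C$ shows that $T \cup \Int T \subset C \cap E$, is topologically a closed disk (in particular simply connected), is rectifiably connected in $C$, and is convex as a subspace of $Y$.

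Next I would set $E' := \varphi(T \cup \Int T) \subset M_\k^2$ and let $Y'$ denote $E'$ equipped with its induced path metric. Because $T \cup \Int T$ is convex in $Y$, the restriction of $d_Y$ to $T \cup \Int T$ already coincides with the path metric on $T \cup \Int T$; because $\varphi$ is an isometric embedding on $C$, it carries rectifiable paths in $T \cup \Int T$ to paths in $E'$ of the same length, and vice versa. Thus $\varphi$ restricts to an isometry $(T \cup \Int T, d_Y) \to (E', d_{Y'})$. Moreover $E'$ is closed, rectifiably connected, and simply connected in $M_\k^2$, so the setting of \thmref{previous_paper_angles_theorem} applies with $E'$ in the role of the ambient subset. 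The image $\varphi(T)$ is then a simple geodesic triangle in $Y'$ with a vertex at $\varphi(p)$, and \thmref{previous_paper_angles_theorem} supplies, for all sufficiently small $\epsilon > 0$, limit segments $S_q^\epsilon, S_r^\epsilon \subset \varphi(T) \cup \Int \varphi(T) = E'$ at $\varphi(p)$ with
\[\angle_{\varphi(p)}^{Y'}(\varphi(q), \varphi(r)) = \angle_{\varphi(p)}^{M_\k^2}(S_q^\epsilon, S_r^\epsilon).\]

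Finally I would pull back via $\varphi^{-1}$. For $\epsilon$ small enough, the approximating geodesics $R_q^{\epsilon,t}$ and $R_r^{\epsilon,t}$ in the definition of the limit segments all lie in the convex slice $C$, so under $\varphi$ they correspond to the analogous approximants in $M_\k^2$; the uniform convergence supplied by \thmref{previous_paper_angles_theorem} therefore transports through the isometry $\varphi^{-1}$ to give limit segments $R_q^\epsilon = \varphi^{-1}(S_q^\epsilon)$ and $R_r^\epsilon = \varphi^{-1}(S_r^\epsilon)$ of $T$ at $p$ in $X$, both lying in $T \cup \Int T$. Since $\varphi$ is an isometric embedding of $C$, it preserves the $X$-Alexandrov angle at $p$, matching it with the $M_\k^2$-angle at $\varphi(p)$; since $\varphi$ restricts to an isometry between $(T \cup \Int T, d_Y)$ and $(E', d_{Y'})$, it preserves the $Y$-Alexandrov angle at $p$, matching it with the $Y'$-angle at $\varphi(p)$. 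Concatenating these three identifications yields $\angle_p^Y(q,r) = \angle_p^X(R_q^\epsilon, R_r^\epsilon)$.

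The main technical obstacle is the verification that $\varphi$ really does induce an isometry of the internal path metrics, i.e.\ that $d_Y$ on $T \cup \Int T$ agrees with $d_{Y'}$ on $E'$ under $\varphi$—not just with some larger path metric that sees paths outside $C$. This is exactly where convexity of $T \cup \Int T$ in $Y$, established in \lemref{disks_are_self_contained}, is essential: without it, one could not guarantee that $Y$-geodesics near $p$ stay inside $T \cup \Int T$, and so the $Y$-Alexandrov angle at $p$ might fail to transfer correctly to $Y'$ through $\varphi$.
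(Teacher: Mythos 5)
Your proposal is correct and follows essentially the same route as the paper: the paper's proof is literally the one-line instruction to apply \lemref{small_simple_triangles_are_planar}, \lemref{disks_are_self_contained}, and \thmref{previous_paper_angles_theorem}, which is exactly the reduction you carry out. Your expanded version simply makes explicit the transfer of the path metric and the angles through the isometric embedding $\varphi$, using the $Y$-convexity of $T \cup \Int T$ from \lemref{disks_are_self_contained}, which is the intended (and correct) way to justify the citation.
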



\begin{proof}
Apply
\lemref{small_simple_triangles_are_planar},
\lemref{disks_are_self_contained},
and
\thmref{previous_paper_angles_theorem}.
\end{proof}

We began this section by mentioning two important features that held in the planar case, which we wanted to extend to the general case.
We can now prove existence of limit segments in the general case, along with Feature 2.

\begin{theorem}\label{limit_segment_theorem}
Let $T$ be a simple geodesic triangle in $Y$ with vertices $p$, $q$, and $r$.
For all $\epsilon > 0$ small enough, both limit segments $R_q^\epsilon$ and $R_r^\epsilon$ of $T$ exist at $p$, and
\[\angle_p^Y (q, r) = \angle_p^X (R_q^\epsilon, R_r^\epsilon).\]
\end{theorem}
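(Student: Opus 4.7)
The plan is to reduce the general case to the already-proven small-triangle case of \corref{small_simple_limit_segments}. The central observation is that both the existence of the limit segments $R_q^\epsilon, R_r^\epsilon$ and the value of the Alexandrov angle $\angle_p^Y(q,r)$ depend only on arbitrarily short initial germs at $p$ of the sides $\sigma = [p,q]_Y$ and $\tau = [p,r]_Y$: the defining family $R_q^{\epsilon,t}$ only involves $\sigma(t)$ for $t$ near $0$, and the Alexandrov angle is defined by a lim sup over such $t, t'$.

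Fix $\epsilon > 0$ small enough that $V := B_X(p, 2\epsilon)$ is a conical neighborhood of $p$ in $X$; since $d_X \le d_Y$ on $E$, every $Y$-ball of radius at most $2\epsilon$ around $p$ is contained in $V$. Parametrize $\sigma, \tau$ by $Y$-arclength from $p$, and for small $\delta > 0$ set $T_\delta$ to be the geodesic triangle in $Y$ with vertices $p, \sigma(\delta), \tau(\delta)$, whose sides are the initial segments of $\sigma, \tau$ and the $Y$-geodesic $\alpha_\delta := [\sigma(\delta), \tau(\delta)]_Y$. Since $\alpha_\delta$ has $Y$-length at most $2\delta$, for $\delta < \epsilon$ we have $T_\delta \subset B_Y(p, 2\epsilon) \subset V$.

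The crux is to show that $T_\delta$ is simple for arbitrarily small $\delta > 0$. The initial segments of $\sigma, \tau$ meet only at $p$ by simplicity of $T$, so the only potential failure is an interior intersection of $\alpha_\delta$ with one of the legs, say $\alpha_\delta \cap \sigma((0,\delta)) \ni \sigma(\delta')$ with $0 < \delta' < \delta$. Unique geodesics in $Y$ then force $\alpha_\delta$ to trace $\sigma$ backwards from $\sigma(\delta)$ to $\sigma(\delta')$ before continuing to $\tau(\delta)$, yielding $d_Y(\sigma(\delta),\tau(\delta)) \ge 2(\delta - \delta')$. If no small $\delta$ gives a simple $T_\delta$, we extract a sequence $\delta_n \to 0$ admitting such $\delta'_n$; passing to a subsequence, $\delta'_n/\delta_n \to c \in [0,1]$. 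If $c = 0$, the length estimate combined with the Euclidean law of cosines drives the comparison angle $\angle_p^{(0),Y}(\sigma(\delta_n),\tau(\delta_n))$ to $\pi$, so $\angle_p^Y(q,r) = \pi$; but this would force $d_Y(q,r) = d_Y(q,p) + d_Y(p,r)$ and hence, by unique geodesics in $Y$, place $p$ on the third side of $T$, collapsing $T$ to an arc and contradicting simplicity of $T$. The remaining case $c > 0$ should be handled by iterating the construction on the sub-sub-triangle with vertices $p, \sigma(\delta'_n), \tau(\delta_n)$, whose third side $[\sigma(\delta'_n), \tau(\delta_n)]_Y$ is a proper subsegment of $\alpha_{\delta_n}$ that avoids $\sigma$ past $\sigma(\delta'_n)$; re-running the analysis should drive the descent toward $c = 0$ and return to the previous contradiction.

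Given a simple $T_\delta$ with both far vertices inside $V$, \corref{small_simple_limit_segments} yields the limit segments of $T_\delta$ at $p$ together with the identity $\angle_p^Y(\sigma(\delta), \tau(\delta)) = \angle_p^X(R_{\sigma(\delta)}^\epsilon, R_{\tau(\delta)}^\epsilon)$. By the locality observation, these limit segments coincide with $R_q^\epsilon, R_r^\epsilon$ of $T$, and $\angle_p^Y(\sigma(\delta),\tau(\delta)) = \angle_p^Y(q,r)$, so the required identity for $T$ follows. The main obstacle is the simplicity argument for $T_\delta$; the $c = 0$ reduction is clean, while the $c > 0$ descent is the delicate technical heart of the proof and requires careful bookkeeping.
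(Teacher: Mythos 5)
Your overall strategy is exactly the paper's: both the limit segments and $\angle_p^Y(q,r)$ depend only on the germs at $p$ of the two sides through $p$, so one shrinks $T$ to a triangle $T_\delta$ with vertices $p,\sigma(\delta),\tau(\delta)$ inside a conical neighborhood and applies \corref{small_simple_limit_segments}; the paper's own proof is precisely this reduction, justified by locality and uniqueness of geodesics in $Y$. You correctly isolate the one point that needs checking---that $T_\delta$ is simple---but your argument for it has genuine gaps. The case $c>0$ is not an argument: ``iterating the construction should drive the descent toward $c=0$'' names no mechanism forcing the iteration to terminate or to land in the $c=0$ situation, and it also ignores that $\alpha_\delta$ may meet the other leg $\tau([0,\delta])$ as well, with intersection points interleaved along $\alpha_\delta$. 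More seriously, the $c=0$ case hinges on the implication ``$\angle_p^Y(q,r)=\pi$ forces $d_Y(q,r)=d_Y(q,p)+d_Y(p,r)$, hence $p\in[q,r]_Y$.'' That is a comparison-type property of $Y$: it holds in CAT(0) spaces, but in a general uniquely geodesic space two geodesics can make Alexandrov angle $\pi$ at $p$ while the geodesic joining their far endpoints avoids $p$; and the CAT(\k) property of $Y$ is precisely what the paper is still in the course of proving, so you may not invoke it. Hence no contradiction with the simplicity of $T$ is obtained, and the reduction is left incomplete.

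A cleaner route to simplicity, which needs no limiting or descent argument, is extremal selection: parametrize $\alpha_\delta=[\sigma(\delta),\tau(\delta)]_Y$ from $\sigma(\delta)$, let $a$ be its \emph{last} point lying on $\sigma([0,\delta])$ and $b$ the \emph{first} point at or after $a$ lying on $\tau([0,\delta])$ (both exist by compactness). By uniqueness of geodesics in $Y$, the subarc of $\alpha_\delta$ from $a$ to $b$ is the geodesic $[a,b]_Y$, and by construction it meets the two legs only at $a$ and $b$; so the triangle with vertices $p,a,b$ is simple and lies in $V$, \emph{unless} $a=p$, i.e.\ unless $\sigma\rvert_{[0,\delta]}\cup\tau\rvert_{[0,\delta]}$ is itself a $Y$-geodesic through $p$. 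That residual case (where $\angle_p^Y(q,r)=\pi$) is not excluded by the simplicity of $T$ and cannot simply be dismissed: the theorem's conclusion must still be verified for such $T$, so it requires a separate argument (for instance, one exploiting \lemref{principal_lemma} to rule the configuration out, or a direct verification that the limit segments exist and make angle $\pi$ in $X$). As written, your proposal follows the paper's route but leaves its key step---simplicity of the shrunken triangle---unestablished.
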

\begin{proof}
Let $V$ be a conical neighborhood about $p$ in $X$.
Since the existence of limit segments of $T$ at $p$ is local, and $Y$ uniquely geodesic, we may assume $q,r \in V$.
The theorem follows from \corref{small_simple_limit_segments}.
\end{proof}

\section{Extending limit segments}

We now turn to proving Feature 1 of the limit segments in the general case.
We begin with a result about variations of geodesics in $Y$.

\begin{lemma}\label{continuous_variation}
Let $p_k \to p$ and $q_k \to q$ in $Y$.
For each $k \in \N$ let $\sigma_k \colon [0,1] \to Y$ be the constant-speed parametrization of the geodesic in $Y$ from $p_k$ to $q_k$.
Then the maps $\sigma_k$ converge uniformly \underline{in $X$} to the constant-speed parametrization $\sigma \colon [0,1] \to Y$ of the geodesic in $Y$ from $p$ to $q$.
\end{lemma}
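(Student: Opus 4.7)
The plan is to combine Arzelà–Ascoli with the uniqueness of $Y$-geodesics.

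First I would establish uniform equicontinuity and an ambient compact target. Since $p_k \to p$ and $q_k \to q$ in $Y$, the quantities $d_Y(p_k, q_k)$ are eventually bounded by some constant $L$. Using $d \le d_Y$, each $\sigma_k$ is $L$-Lipschitz in the $X$-metric, so the family $\{\sigma_k\}$ is equicontinuous in $X$. The images all lie in a fixed closed $d$-ball about $p$; because $X$ is a locally-finite CAT($\kappa$) space it is proper, so this ball is compact. Arzelà–Ascoli then extracts a subsequence $\sigma_{k_j}$ converging uniformly in $X$ to a continuous map $\tau \colon [0,1] \to E$ (the limit lies in $E$ because $E$ is closed in $X$), with $\tau(0) = p$ and $\tau(1) = q$.

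Next I would identify $\tau$ with $\sigma$. The bridge between $X$-convergence and $Y$-geodesy is the observation that $\ell_X(\alpha) = \ell_Y(\alpha)$ for every curve $\alpha$ in $E$: one direction uses $d \le d_Y$, the other uses that $d_Y$ is by definition the infimum of $X$-lengths of curves in $E$. Combining this with lower semicontinuity of $\ell_X$ under uniform $X$-convergence gives, for any $0 \le s \le t \le 1$,
\[
d_Y(\tau(s), \tau(t)) \le \ell_Y(\tau|_{[s,t]}) = \ell_X(\tau|_{[s,t]}) \le \liminf_j \ell_X(\sigma_{k_j}|_{[s,t]}) = |t-s|\, L.
\]
Summing across $[0,s]$, $[s,t]$, $[t,1]$ and invoking the triangle inequality $L = d_Y(p,q)$ forces equality throughout, so $\tau$ is a constant-speed $Y$-geodesic from $p$ to $q$. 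By the uniqueness corollary to \lemref{principal_lemma}, $\tau = \sigma$.

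Finally, the standard subsequence argument closes the loop: every subsequence of $(\sigma_k)$ has a further Arzelà–Ascoli-convergent sub-subsequence, and by the above its uniform $X$-limit is always $\sigma$; hence the entire sequence converges uniformly to $\sigma$ in $X$. The main delicate point is the length identity $\ell_X = \ell_Y$ on curves in $E$—without it, uniform convergence in the ambient metric $d$ could a priori hide substantial extra $Y$-length in the limit. Once that identity is in place, everything else is a routine equicontinuity-plus-uniqueness exercise.
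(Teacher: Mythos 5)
Your proposal is correct and takes essentially the same route as the paper: Lipschitz bounds coming from $d \le d_Y$, an Arzel\`a--Ascoli compactness argument in the (proper) ambient space, and uniqueness of geodesics in $Y$ to identify the limit with $\sigma$. You actually supply more detail than the paper does---namely the length lower-semicontinuity argument showing the uniform $X$-limit is a constant-speed $Y$-geodesic, and the subsequence trick upgrading to convergence of the whole sequence---steps the paper leaves implicit.
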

\begin{proof}
Let $\iota \colon Y \to E$ be the setwise identity map, $\iota(y) = y \in E$ for all $y \in Y$. 
For each $k \in \N$ let $\sigma_k^E = \iota \circ \sigma_k$, and note that $\sigma_k^E \colon [0,1] \to E$ is $d_Y(p_k,q_k)$-Lipschitz.
Since $p_k \to p$ and $q_k \to q$ in $Y$, we know $d_Y(p_k,q_k) \to d_Y(p,q)$.
Since $E$ is proper, by the Arzel\`a-Ascoli Theorem the maps $\sigma_k^E$ have a uniform limit $\sigma^E \colon [0,1] \to E$, which is $d_Y(p,q)$-Lipschitz.
By uniqueness of geodesics in $Y$, we must have $\sigma^E = \iota \circ \sigma$.
\end{proof}

\begin{rem}
If the uniform convergence in \lemref{continuous_variation} was in $Y$ instead of in $X$, then we could say geodesics in $Y$ vary continuously in their endpoints.
\end{rem}

\begin{corollary}\label{geodesic_variations}
Let $\iota \colon Y \to E$ be the setwise identity map, $\iota(y) = y \in E$ for all $y \in Y$.
Let $\tau_0, \tau_1 \colon [0,1] \to Y$ be two continuous paths in $Y$, and for each $s \in [0,1]$ let $\sigma_s \colon [0,1] \to Y$ be constant-speed parametrization of the (possibly degenerate) geodesic $[\tau_0(s), \tau_1(s)]_Y$ in $Y$ from $\tau_0(s)$ to $\tau_1(s)$.
Then the map $f \colon [0,1] \times [0,1] \to E$ defined by setting $f(s,t) = \iota(\sigma_s(t))$ for all $s,t \in [0,1]$ is continuous in $X$.
\end{corollary}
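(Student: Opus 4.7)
The plan is to reduce the joint continuity of $f$ to the sequential statement of \lemref{continuous_variation} combined with a triangle-inequality splitting in the $d_X$-metric. Fix a point $(s,t) \in [0,1]^2$ and an arbitrary sequence $(s_k,t_k) \to (s,t)$; the goal is to show $d_X \bigl( f(s_k,t_k), f(s,t) \bigr) \to 0$.

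First, since $\tau_0$ and $\tau_1$ are continuous as maps into $Y$, we have $\tau_0(s_k) \to \tau_0(s)$ and $\tau_1(s_k) \to \tau_1(s)$ in $Y$. Applying \lemref{continuous_variation} to these convergent sequences of endpoints, the constant-speed geodesics $\sigma_{s_k} \colon [0,1] \to Y$ converge to $\sigma_s \colon [0,1] \to Y$ uniformly in $X$; that is, $\sup_{u \in [0,1]} d_X(\sigma_{s_k}(u), \sigma_s(u)) \to 0$. (The possibly degenerate case $\tau_0(s) = \tau_1(s)$ causes no trouble, as the constant map is still the unique geodesic in $Y$ between coincident endpoints and \lemref{continuous_variation} applies verbatim.)

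Next, split
\[
d_X \bigl( f(s_k,t_k), f(s,t) \bigr)
\le d_X \bigl( \sigma_{s_k}(t_k), \sigma_s(t_k) \bigr) + d_X \bigl( \sigma_s(t_k), \sigma_s(t) \bigr).
\]
The first term is bounded by $\sup_{u \in [0,1]} d_X(\sigma_{s_k}(u), \sigma_s(u))$, which tends to $0$ by the uniform convergence established above. For the second term, observe that $\sigma_s \colon [0,1] \to Y$ is Lipschitz for $d_Y$, and since $d_X \le d_Y$ on $E$, the composite $\iota \circ \sigma_s \colon [0,1] \to E \subset X$ is also Lipschitz; hence $d_X(\sigma_s(t_k), \sigma_s(t)) \to 0$ as $t_k \to t$.

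I expect no serious obstacle here: the real work lies in \lemref{continuous_variation}, which has already been established. The only mild subtlety is being careful not to conflate the $Y$-metric with the $X$-metric; the argument proceeds entirely in $d_X$ on the range side, using $d_Y$-continuity only to feed the hypotheses of \lemref{continuous_variation} and to control $\sigma_s$ along $[0,1]$ via the comparison $d_X \le d_Y$.
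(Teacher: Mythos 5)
Your argument is correct and is exactly the intended route: the paper states \corref{geodesic_variations} without a written proof, as an immediate consequence of \lemref{continuous_variation}, and your triangle-inequality splitting (uniform convergence in $X$ of $\sigma_{s_k} \to \sigma_s$ from the lemma for the $s$-variable, plus Lipschitz continuity of $\sigma_s$ combined with $d_X \le d_Y$ on $E$ for the $t$-variable) is the standard way to fill in that step. Your handling of the degenerate case and the reduction of joint continuity to sequential continuity on $[0,1]^2$ are both fine.
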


\noindent Thus a variation of geodesics in $Y$ is a homotopy in $E \subset X$ (see Figure \ref{fig:variation}).

\begin{figure}[h]
\includegraphics{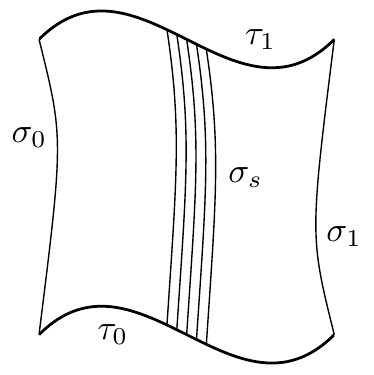}
\caption{A variation of geodesics in $Y$ gives a homotopy in $E$ (as a subset of $X$), by \corref{geodesic_variations}.}
\label{fig:variation}
\end{figure}

This homotopy allows us to compare interiors of two related triangles (a similar statement for generic simple closed curves in $Y$ would be false).

\begin{lemma}\label{subtriangle_interiors}
Let $T$ be a simple geodesic triangle in $Y$ with vertices $p,q,r$.
Let $\widehat{T}$ be a simple geodesic triangle in $Y$ with vertices $p,q',r'$, where $q' \in [p,q]_Y$ and $r' \in [p,r]_Y$.
There is some $\epsilon > 0$ such that $\Int T \cap B_X (p, \epsilon) = \Int \widehat{T} \cap B_X (p, \epsilon)$.
\end{lemma}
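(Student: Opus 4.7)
The plan is to build a continuous $2$-chain $f\colon [0,1]^2 \to X$ whose singular boundary equals $T - \widehat T$ and whose image has positive $X$-distance from $p$. With such an $f$ in hand, choosing $\epsilon$ smaller than that distance gives, for every $x \in B_X(p,\epsilon)$, a chain certifying $[T] = [\widehat T]$ in $H_1(X \setminus \set{x})$; combined with $T \cap B_X(p,\epsilon) = \widehat T \cap B_X(p,\epsilon)$ (for $\epsilon$ small), this yields $\Int T \cap B_X(p,\epsilon) = \Int \widehat T \cap B_X(p,\epsilon)$.

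To construct $f$, I would apply \corref{geodesic_variations} with $\tau_0 \colon [0,1] \to Y$ the constant-speed parametrization of $[q',q]_Y$ from $q'$ to $q$, and $\tau_1$ that of $[r',r]_Y$ from $r'$ to $r$. The resulting continuous map $f\colon [0,1]^2 \to E \subset X$ satisfies $f(0,\cdot) = [q',r']_Y$, $f(1,\cdot) = [q,r]_Y$, $f(\cdot,0) = [q',q]_Y$, and $f(\cdot,1) = [r',r]_Y$, so its boundary as a $2$-chain is $[q',q]_Y + [q,r]_Y - [r',r]_Y - [q',r']_Y$. Using $[p,q]_Y = [p,q']_Y \cup [q',q]_Y$ and $[p,r]_Y = [p,r']_Y \cup [r',r]_Y$ (by uniqueness of geodesics in $Y$), one checks this coincides with $T - \widehat T$ after the shared sides from $p$ cancel.

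The main obstacle is showing $p \notin f([0,1]^2)$. Suppose, for contradiction, that $p \in [\tau_0(s), \tau_1(s)]_Y$ for some $s \in [0,1]$. Since $\tau_0(s) \in [q',q]_Y \subset [p,q]_Y \setminus \set{p}$ (and similarly for $\tau_1(s)$), uniqueness of geodesics in $Y$ forces $[\tau_0(s), p]_Y$ to be a subsegment of $[p,q]_Y$ and $[p, \tau_1(s)]_Y$ a subsegment of $[p,r]_Y$; then for every $t \in (0, d_Y(\tau_0(s), p)]$ and $t' \in (0, d_Y(p, \tau_1(s))]$, the points $a_t$ and $b_{t'}$ on $[p,q]_Y$ and $[p,r]_Y$ at $Y$-distance $t$ and $t'$ from $p$ lie on $[\tau_0(s), \tau_1(s)]_Y$, so $d_Y(a_t, b_{t'}) = t + t'$ and $[a_t, b_{t'}]_Y$ passes through $p$. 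Iterating this local property outward along the rays $[p,q]_Y$ and $[p,r]_Y$ (using that $Y$-geodesics through $p$ extend uniquely) ultimately forces $[q, r]_Y$ to pass through $p$, contradicting the simpleness of $T$; the endpoint case $s = 0$ yields the same contradiction from simpleness of $\widehat T$.

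By compactness of $f([0,1]^2)$ we then have $d_X(p, f([0,1]^2)) > 0$; choose $\epsilon$ smaller than this distance and also smaller than each of $d_X(p, [q,r]_Y)$, $d_X(p, [q',r']_Y)$, $d_X(p, [q',q]_Y)$, and $d_X(p, [r',r]_Y)$ (all positive by simpleness and the fact that $p \neq q, q', r, r'$). For this $\epsilon$, both $T \cap B_X(p,\epsilon)$ and $\widehat T \cap B_X(p,\epsilon)$ equal $([p,q']_Y \cup [p,r']_Y) \cap B_X(p,\epsilon)$, and for any $x \in B_X(p,\epsilon) \setminus T$ the chain $f$ lies in $X \setminus B_X(p,\epsilon) \subset X \setminus \set{x}$, yielding $[T] = [\widehat T]$ in $H_1(X \setminus \set{x})$, whence $x \in \Int T$ if and only if $x \in \Int \widehat T$, as required.
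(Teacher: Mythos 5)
Your overall architecture is sound and close in spirit to the paper's: build the ruled surface $f$ from \corref{geodesic_variations}, note $\partial f$ is homologous to $T - \widehat T$, and conclude $\Int T$ and $\Int \widehat T$ agree on a small ball once the image of $f$ is known to miss a neighborhood of $p$ (the paper phrases this as a homotopy from $T$ to $\widehat T$ constant near $p$, but the bookkeeping is equivalent). The problem is the step you yourself flag as the main obstacle: your argument that $p \notin f([0,1]^2)$ has a genuine gap. From ``$[a_t,p]_Y \cup [p,b_{t'}]_Y$ is geodesic for $t \le d_Y(p,\tau_0(s))$, $t' \le d_Y(p,\tau_1(s))$'' you ``iterate outward'' to force $[q,r]_Y$ through $p$, invoking that ``$Y$-geodesics through $p$ extend uniquely.'' No such extension property (unique or otherwise) has been established for $Y$ --- the geodesic-extension results in the paper (\lemref{geodesic_extension_lemma}, \corref{curve_theorem_2}) concern $\Int\gamma$ inside $X$, not geodesics of $Y$ --- and it is false in general for complete uniquely geodesic spaces (think of branching, where extensions are non-unique, or boundary points, where they do not exist). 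More fundamentally, the propagation from ``the concatenation through $p$ is geodesic on a short piece'' to ``the concatenation $[q,p]_Y \cup [p,r]_Y$ is geodesic'' is a local-to-global statement that is valid in CAT(0) spaces but not in arbitrary uniquely geodesic spaces; since the CAT($\k$) property of $Y$ is exactly what the paper is in the process of proving, you cannot use it here, and nothing proved so far (unique geodesics, completeness) yields it.

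The missing ingredient is supplied by \lemref{principal_lemma} together with \lemref{convexity_lemma}, and this is how the paper proceeds. Let $G = [q,r]_X$ and $\dG = d(\cdot,G)$. By \lemref{principal_lemma}, the maximum of $\dG$ on the simple closed curve $T$ can only occur at a vertex or where $\dG$ is locally constant, and a short argument shows $p$ is the \emph{strict} maximizer, so $\dG \le \dG(p) - \epsilon$ on the path-connected set $P = [q',q]_Y \cup [q,r]_Y \cup [r,r']_Y$ for some $\epsilon > 0$. Applying \lemref{convexity_lemma} with $A = P$ and the convex set $C = \setp{x \in X}{\dG(x) \le \dG(p) - \epsilon}$ shows every $Y$-geodesic between points of $P$ --- in particular every $[\tau_0(s),\tau_1(s)]_Y$ in your ruling --- stays in $C$, which is disjoint from $B_X(p,\epsilon)$. (Note that your $\tau_0,\tau_1$ take values in $[q',q]_Y$ and $[r',r]_Y$, whose union is not path-connected, so you must enlarge to $P$ before invoking \lemref{convexity_lemma}.) With this replacement for your third paragraph, the rest of your argument goes through.
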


\begin{proof}
Let $P \subset Y$ be the broken-geodesic path $P = [q',q]_Y \cup [q,r]_Y \cup [r,r']_Y$ in $Y$, and $Q \subset Y$ the quadrilateral $Q = P \cup [r',q']_Y$.
Let $G$ be the geodesic $[q,r]_X$ between $q$ and $r$ in $X$.
By \lemref{principal_lemma}, $p$ globally maximizes $\dG$ (the distance in $X$ to $G$) among points of $T$.
Hence by compactness, there is some $\epsilon > 0$ such that $\dG(x) \le \dG(p) - \epsilon$ for all $x \in P$.
Let $C \subset X$ be the closed, convex subspace $C = \setp{x \in X}{d(x,G) \le \epsilon}$.
Then $P \subset C$, and by \lemref{convexity_lemma}, $Q \subset C$.
\corref{geodesic_variations} guarantees a nulhomotopy of $Q$ in $E$.
Applying \lemref{convexity_lemma} again, we see that this nulhomotopy lies completely in $C$.
Therefore, $T$ and $\widehat{T}$ are homotopic in $E$ by a homotopy that is constant on $X \setminus C \supset B_X (p, \epsilon)$.
In particular, $\Int T \cap B_X (p, \epsilon) = \Int \widehat{T} \cap B_X (p, \epsilon)$.
\end{proof}

\begin{lemma}\label{containing_limit_segments}
Let $T$ be a simple geodesic triangle in $Y$ with vertices $p,q,r$.
For small enough $\epsilon > 0$, both limit segments $R_q^\epsilon$ and $R_r^\epsilon$ of $T$ at $p$ lie completely in $T \cup \Int T$.
\end{lemma}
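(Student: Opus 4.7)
The plan is to reduce to the planar case already handled by Corollary \ref{small_simple_limit_segments} by passing to a small sub-triangle of $T$ based at $p$.

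First I would fix a conical neighborhood $V$ of $p$ in $X$ and choose points $q' \in [p,q]_Y \setminus \set{p}$ and $r' \in [p,r]_Y \setminus \set{p}$ close enough to $p$ that $q', r' \in V$ and the geodesic triangle $\widehat{T}$ in $Y$ with vertices $p, q', r'$ is again a simple geodesic triangle. Corollary \ref{small_simple_limit_segments} then supplies $\epsilon_0 > 0$ for which both limit segments $R_{q'}^{\epsilon_0}$ and $R_{r'}^{\epsilon_0}$ of $\widehat{T}$ at $p$ exist and lie in $\widehat{T} \cup \Int \widehat{T}$. Inspecting the definition of limit segment, one has $R_q^\epsilon = R_{q'}^\epsilon$ and $R_r^\epsilon = R_{r'}^\epsilon$ for all sufficiently small $\epsilon$, since $[p,q]_Y$ and $[p,q']_Y$ trace the same curve out of $p$ up to reparametrization, and likewise on the $r$-side.

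The remaining work is transferring the containment from $\widehat{T} \cup \Int \widehat{T}$ to $T \cup \Int T$. For the interiors, Lemma \ref{subtriangle_interiors} provides $\epsilon_1 > 0$ with $\Int T \cap B_X(p, \epsilon_1) = \Int \widehat{T} \cap B_X(p, \epsilon_1)$. The boundary is subtler: while $[p,q']_Y \subset T$ and $[p,r']_Y \subset T$, the ``far'' side $[q',r']_Y$ of $\widehat{T}$ need not lie in $T$. However, simpleness of $\widehat{T}$ gives $p \notin [q',r']_Y$, and since $[q',r']_Y$ is compact in $X$ it follows that $\delta := d_X(p, [q',r']_Y) > 0$. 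Setting $\epsilon = \min(\epsilon_0, \epsilon_1, \delta)$, any point $x \in R_q^\epsilon = R_{q'}^\epsilon$ satisfies $d_X(p, x) \le \epsilon$, so: if $x \in \widehat{T}$ then $x \notin [q',r']_Y$, forcing $x \in [p,q']_Y \cup [p,r']_Y \subset T$; if $x \in \Int \widehat{T}$ then $x \in \Int T$ by Lemma \ref{subtriangle_interiors}. The argument for $R_r^\epsilon$ is identical, and the lemma follows.

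I expect the main technical obstacle to be the very first step: verifying that $q', r'$ can be chosen close enough to $p$ that $\widehat{T}$ is simple. The potential pitfall is a ``backtracking'' $[q',r']_Y$ overlapping a portion of $[p,q']_Y$ or $[p,r']_Y$; ruling this out should follow from the uniqueness of geodesics in $Y$ established earlier, together with continuity of geodesics in their endpoints (Lemma \ref{continuous_variation}), but the precise argument is not immediately transparent.
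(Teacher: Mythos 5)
Your proposal is correct and takes essentially the same route as the paper's proof: pass to the sub-triangle $\widehat{T}$ with vertices $p,q',r'$ inside a conical neighborhood, apply Corollary~\ref{small_simple_limit_segments} to $\widehat{T}$, use Lemma~\ref{subtriangle_interiors} to identify the interiors near $p$, and note that the limit segments of $T$ and $\widehat{T}$ at $p$ coincide; your $\delta$-argument for the boundary is the same point the paper makes by shrinking $\epsilon$ so that $T \cap \overline{B}_X(p,\epsilon) = \widehat{T} \cap \overline{B}_X(p,\epsilon)$. The simpleness issue you flag is handled in the paper exactly along the lines you anticipate: since $T$ is simple, $p \notin [q,r]_Y$, so uniqueness of geodesics in $Y$ gives $p \notin [q',r']_Y$, and after trimming near $q'$ and $r'$ one may assume $\widehat{T}$ is simple.
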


\begin{proof}
Let $V$ be a conical neighborhood of $p$ in $X$.
Choose $q' \in [p,q]_Y \setminus \set{p}$ and $r' \in [p,r]_Y \setminus \set{p}$ such that both $q',r' \in V$.
Since $p \notin [q,r]_Y$, by uniqueness of geodesics $p \notin [q',r']_Y$; thus we may assume the geodesic triangle $\widehat{T}$ in $Y$ with vertices $p,q',r'$ is simple.
By \corref{small_simple_limit_segments},
there exists $\epsilon > 0$ such that both limit segments $\widehat{R}_q^\epsilon$ and $\widehat{R}_r^\epsilon$ of $\widehat{T}$ exist at $p$, and both $\widehat{R}_q^\epsilon$ and $\widehat{R}_r^\epsilon$ lie completely in $\widehat{T} \cup \Int \widehat{T}$.
By \lemref{subtriangle_interiors}, we may assume 
$\Int T \cap \overline{B}_X (p, \epsilon) = \Int \widehat{T} \cap \overline{B}_X (p, \epsilon)$; clearly we may also assume $T \cap \overline{B}_X (p, \epsilon) = \widehat{T} \cap \overline{B}_X (p, \epsilon)$.
Since the limit segments $R_q^\epsilon$ and $R_r^\epsilon$ of $T$ at $p$ coincide with the limit segments $\widehat{R}_q^\epsilon$ and $\widehat{R}_r^\epsilon$ of $\widehat{T}$ at $p$, 
we have proved the lemma.
\end{proof}

\begin{lemma}\label{feature_2}
Let $T$ be a simple geodesic triangle in $Y$ with vertices $p,q,r$.
For small enough $\epsilon > 0$, the limit segment $R_q^\epsilon$ of $T$ at $p$ can be extended to a geodesic in $X$ from $p$ to $q' \in [q,r]_Y \subset T$, with $[p,q']_X \subset T \cup \Int T$.
Similarly, the limit segment $R_r^\epsilon$ can be extended to a geodesic in $X$ from $p$ to $r' \in [q,r]_Y \subset T$, with $[p,r']_X \subset T \cup \Int T$.
\end{lemma}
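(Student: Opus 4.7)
Following the reduction used in \lemref{containing_limit_segments}, I would reduce to a small subtriangle inside a conical neighborhood of $p$, invoke Feature~1 from the planar case (recalled in the opening of this section), and then push the resulting segment across $\Int T$ to $[q,r]_Y$ using \corref{curve_theorem_2}.

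Concretely, let $V$ be a conical neighborhood of $p$ in $X$, and pick $\widehat q\in([p,q]_Y\setminus\set{p})\cap V$ and $\widehat r\in([p,r]_Y\setminus\set{p})\cap V$ close enough to $p$ that the subtriangle $\widehat T$ with vertices $p,\widehat q,\widehat r$ is simple and its filled version $\widehat F:=\widehat T\cup\Int\widehat T$ lies in a ball $B_X(p,\delta)$ small enough that \lemref{subtriangle_interiors} gives $\Int\widehat T\cap B_X(p,\delta)=\Int T\cap B_X(p,\delta)$ and such that $T\cap B_X(p,\delta)$ consists only of the initial subarcs of $[p,q]_Y$ and $[p,r]_Y$ contained in $B_X(p,\delta)$. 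Then $\widehat F\subset T\cup\Int T$, and simplicity of $\widehat T$ together with this localization forces $[\widehat q,\widehat r]_Y\cap T=\set{\widehat q,\widehat r}$. By \lemref{small_simple_triangles_are_planar} and \lemref{disks_are_self_contained}, $\widehat F$ is simply connected and rectifiably connected and sits inside a closed convex $C\subset X$ that isometrically embeds in $M_\k^2$. Transporting $\widehat F$ to $M_\k^2$, Feature~1 of the limit-segment construction from \cite[Theorem B]{planar} produces a point $\widehat q'\in[\widehat q,\widehat r]_Y$ and an extension $[p,\widehat q']_X\subset\widehat F$ of the limit segment of $\widehat T$ at $p$ in direction $\widehat q$. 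Because the limit segment depends only on the initial direction of the $Y$-geodesic from $p$, and $[p,\widehat q]_Y$ is an initial subsegment of $[p,q]_Y$, the limit segments of $T$ and $\widehat T$ at $p$ agree for all small $\epsilon$, so $[p,\widehat q']_X$ extends $R_q^\epsilon$ as a geodesic in $X$; moreover $\widehat q'\in\widehat F\setminus T\subset\Int T$.

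To push the extension the rest of the way to $[q,r]_Y$, I would pick a point $y$ on $[p,\widehat q']_X$, strictly between $p$ and $\widehat q'$ and close enough to $\widehat q'$ that $y\in\Int T$ and $[y,\widehat q']_X\subset\Int T$ (available from \lemref{open_face_lemma} when $\widehat q'$ lies in the interior of a face of $X$; when $\widehat q'$ lies on an edge or vertex, combine this with the conical-neighborhood structure around $\widehat q'$ and $\widehat q'\in\Int T$). Then \corref{curve_theorem_2} applied to $T$ extends $[y,\widehat q']_X$ to a geodesic $[y,q']_X\subset T\cup\Int T$ with $q'\in T$ and $[y,q']_X\setminus\set{q'}\subset\Int T$, and local unique extendibility of $X$-geodesics lets me reassemble $[p,y]_X$ with $[y,q']_X$ into a single geodesic $[p,q']_X$ in $X$ extending $R_q^\epsilon$. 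Finally, $q'$ must lie on $[q,r]_Y$: if $q'$ were an interior point of $[p,q]_Y$, then \lemref{prop1} would force $[p,q']_X=[p,q']_Y$, an initial subsegment of $[p,q]_Y$, contradicting the fact that $[p,q']_X$ passes through $\widehat q'\in[\widehat q,\widehat r]_Y$, which is disjoint from $[p,q]_Y$ by simplicity of $\widehat T$; symmetrically $q'\notin[p,r]_Y\setminus\set{r}$. The same construction handles $R_r^\epsilon$.

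I expect the hard part to be the local extension past $\widehat q'$: arranging the sub-geodesic $[y,\widehat q']_X\subset\Int T$ on which \corref{curve_theorem_2} can be invoked (straightforward when $\widehat q'$ lies in an open face, but requiring some care using the link condition at $\widehat q'$ together with $\widehat q'\in\Int T$ when $\widehat q'$ lies on an edge or vertex), and verifying that the Curve Theorem's extension reassembles with $[p,y]_X$ into a genuine $X$-geodesic rather than a broken path.
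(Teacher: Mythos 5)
There is a genuine gap: your argument only works when the limit segment (equivalently, its planar extension) actually enters $\Int T$, and the key claim ``$\widehat{q}' \in \widehat{F} \setminus T \subset \Int T$'' can simply fail. If, for example, $[p,q]_Y$ happens to be a geodesic of $X$ near $p$, then $R_q^\epsilon$ lies along $[p,\widehat{q}]_Y \subset T$, and the ``Feature 1'' extension of the limit segment of $\widehat{T}$ may terminate at $\widehat{q}' = \widehat{q} \in T$ without ever meeting $\Int \widehat{T}$ --- reaching ``the opposite side'' $[\widehat{q},\widehat{r}]_Y$ includes reaching it at an endpoint. In that situation there is no terminal subsegment $[y,\widehat{q}']_X \subset \Int T$, so \corref{curve_theorem_2} has nothing to extend and the push across $\Int T$ to $[q,r]_Y$ never gets started. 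This is precisely the case that occupies most of the paper's proof: after the easy case where $R_q^\epsilon$ enters $\Int T$ (extend by \corref{curve_theorem_2}, locate the endpoint with \lemref{prop3} --- essentially your last step), the paper treats $R_q^\epsilon \subset T$ separately, by taking the midpoint $w$ of $R_q^\epsilon$, approximating it by interior points $w_k$ on faces via \lemref{curve_theorem_1}, extending the maximal interior subarcs of the geodesics $[p,w_k]_X$ by \corref{curve_theorem_2}, ruling out landing points on $[p,q]_Y$ with a degenerate-triangle argument through \lemref{principal_lemma}, and passing to the limit. Your proposal has no counterpart to this argument, and it cannot be bypassed by shrinking the subtriangle.

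A secondary problem is the reliance on Feature 1 of the planar case as a citable black box. The paper imports \cite[Theorem B]{planar} only in the form of \thmref{previous_paper_angles_theorem} (existence of the limit segments, containment in $T \cup \Int T$, and the angle equality); Feature 1 is described only as a property of the construction inside the proof of that theorem, and the present lemma is exactly its general-case analogue, which the paper proves without assuming the planar version. Even granting the planar Feature 1, it does not give you interiority of $\widehat{q}'$ (the point of the previous paragraph), and the subsidiary claims --- that $\widehat{F} = \widehat{T} \cup \Int\widehat{T} \subset T \cup \Int T$, and that $\delta$ can be chosen so that $T \cap B_X(p,\delta)$ consists only of the two initial subarcs --- are asserted rather than proved; the latter is not obvious, since a $Y$-geodesic side of $T$ could a priori return $X$-close to $p$. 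The final reassembly of $[p,y]_X$ with $[y,q']_X$ along a nondegenerate overlap into a single $X$-geodesic, and the identification $q' \in [q,r]_Y$ via \lemref{prop1}/\lemref{prop3}, are fine.
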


\begin{proof}
Choose $\epsilon > 0$ small enough that both limit segments $R_q^\epsilon$ and $R_r^\epsilon$ of $T$ at $p$ lie completely in $T \cup \Int T$ (possible by \lemref{containing_limit_segments}).
If $R_q^\epsilon$ enters $\Int T$, then by \lemref{curve_theorem_2} we can extend $R_q^\epsilon$ to a geodesic $[p,q']_X$ in $X$ that lies completely in $T \cup \Int T$ with $q' \in T$; by \lemref{prop3}, $q' \in [q,r]_Y$, as desired.
So assume $R_q^\epsilon \subset T$.

Assume first that $R_q^\epsilon \subset [p,q]_Y$.
We may assume $R_q^\epsilon$ and $R_r^\epsilon$ lie in a conical neighborhood $V$ of $p$ in $X$.
Let $w$ be the midpoint of $R_q^\epsilon$; by \lemref{curve_theorem_1}, we may find be a sequence of points $w_k \in \Int T$ such that $w_k \to w$ in $X$ and each $w_k$ lies on a face of $X$.
For each $k \in \N$, let $[p_k,w_k]_X$ be the maximal subarc of the geodesic $[p,w_k]_X$ in $X$ between $p$ and $w_k$ such that $[p_k,w_k]_X \setminus \set{p_k} \subset \Int T$.
By \lemref{curve_theorem_2}, we can extend $[p_k,w_k]_X$ to a geodesic $[p_k,x_k]_X$ in $X$ such that $x_k \in T$ and $[p_k,x_k]_X \setminus \set{p_k,x_k} \subset \Int T$.
Because $p$ and $w$ lie in the conical neighborhood $V$, for all sufficiently large $k$ we find $[p,x_k]_X \setminus \set{p}$ lies on a face of $X$, and thus $p_k \in T$.
Since $T$ is simple and $[p,q]_Y$ follows $R_q^\epsilon$, which is a geodesic in $X$ with one endpoint $p$ and midpoint $w$, we see that eventually every $p_k \in [p,r]_Y$ (possibly $p_k = p$) and $p_k \to p$.
Since $w_k \in \Int T$ lies on $[p_k,x_k]_X$, by \lemref{prop3} we must have $x_k \in [p,q]_Y$ or $x_k \in [q,r]_Y$.
In the first case, we have a new simple geodesic triangle in $Y$ with vertices $p,p_k,x_k$ formed by concatenating the geodesics $[p,p_k]_Y \subset [p,r]_Y \subset T$, $[p_k,x_k]_X \subset E$, and $[x_k,p]_Y \subset [q,p]_Y \subset T$.
But this simple geodesic triangle has all its vertices along the (closed, convex) geodesic $[p,x_k]_X$ in $X$; by \lemref{principal_lemma} the whole triangle must lie in $[p,x_k]_X$, making it degenerate and therefore not simple.
Thus only the second case is possible:  $x_k \in [q,r]_Y$.
Taking the limit of geodesics $[p_k,x_k]_X$ in $X$ as $k \to \infty$, we obtain a geodesic $[p,x]_X$ in $X$; by compactness, $x \in [q,r]_Y$ and $[p,x]_X \subset T \cup \Int T$.
Thus we have proved the lemma in this case, too.

Finally, assume that $R_q^\epsilon$ is not a subset of $[p,q]_Y$.
Since $R_q^\epsilon \subset T$, we must have $R_q^\epsilon \subset [p,r]_Y$.
But then $R_r^\epsilon = R_q^\epsilon \subset [p,r]_Y$.
So, swapping the roles of $q$ and $r$ in the previous paragraph, we can extend $R_r^\epsilon = R_q^\epsilon$ in $T \cup \Int T$ to $[q,r]_Y$, as desired.

The proof for the limit segment $R_r^\epsilon$ is completely similar.
\end{proof}

\section{Curvature}

We finally arrive at the main result of the paper.

\begin{theorem}\label{main_theorem}
Let $E$ be a closed, rectifiably-connected subspace of a locally-finite {\rm CAT(\k)} $M_{\k}$-polyhedral $2$-complex $X$, where $\k \le 0$.
If $H_1(E) = 0$ then $E$, under the induced path metric, is a complete {\rm CAT(\k)} space.
\end{theorem}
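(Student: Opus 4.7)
My plan is to verify the Alexandrov-angle CAT($\kappa$) inequality
\[\angle_p^Y(q,r)\le\angle_p^{(\kappa),Y}(q,r)\]
for every distinct triple $p,q,r\in Y$; completeness and the geodesic property of $Y$ are immediate from \lemref{complete and geodesic} (applied to the proper, rectifiably-connected subspace $E$), and unique geodesic-ness is the corollary to \lemref{principal_lemma}. Since $Y$ is uniquely geodesic, non-simple geodesic triangles are degenerate and the CAT inequality is trivial in that case, so it suffices to fix a simple geodesic triangle $T$ with vertices $p,q,r$ and verify the inequality at the vertex $p$; the analogues at $q$ and $r$ follow by relabeling.

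For such a $T$, I would invoke \lemref{feature_2} at $p$: the limit segments $R_q^\epsilon,R_r^\epsilon$ extend to $X$-geodesics $[p,q_0]_X,[p,r_0]_X\subset T\cup\Int T$ with $q_0,r_0\in[q,r]_Y$. Since $H_1(E)=0$ forces $T\cup\Int T\subset E$, \lemref{prop1} promotes each of these extensions to a $Y$-geodesic, whence in particular $d_X(p,q_0)=d_Y(p,q_0)$ and $d_X(p,r_0)=d_Y(p,r_0)$. By \thmref{limit_segment_theorem},
\[\angle_p^Y(q,r)=\angle_p^X(R_q^\epsilon,R_r^\epsilon)=\angle_p^X(q_0,r_0),\]
and the CAT($\kappa$) hypothesis on $X$ applied to the triangle with vertices $p,q_0,r_0$ in $X$ gives $\angle_p^X(q_0,r_0)\le\angle_p^{(\kappa),X}(q_0,r_0)$.

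It then remains to prove $\angle_p^{(\kappa),X}(q_0,r_0)\le\angle_p^{(\kappa),Y}(q,r)$, which would close the chain. Both quantities are angles in $M_\kappa^2$ triangles sharing legs $d_X(p,q_0)=d_Y(p,q_0)$ and $d_X(p,r_0)=d_Y(p,r_0)$, with $d_X(q_0,r_0)\le d_Y(q_0,r_0)\le d_Y(q,r)$. The plan is to build the $Y$-comparison triangle $(\bar p,\bar q,\bar r)$ in $M_\kappa^2$, place $\bar q_0,\bar r_0\in[\bar q,\bar r]$ at positions matching $d_Y(q,q_0)$ and $d_Y(r_0,r)$, and then apply Alexandrov's lemma together with monotonicity of the comparison-angle function in $M_\kappa^2$ to dominate $\angle_p^{(\kappa),X}(q_0,r_0)$ by the sub-angle at $\bar p$ subtended by $[\bar q_0,\bar r_0]$, which is in turn bounded by the full comparison angle $\angle_p^{(\kappa),Y}(q,r)$. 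The main obstacle lies precisely in this planar comparison: the legs $d_Y(p,q_0),d_Y(p,r_0)$ need not dominate $d_Y(p,q),d_Y(p,r)$, so naive monotonicity in the legs is insufficient, and the delicate step is to exploit the specific fact---supplied by \lemref{feature_2} and \corref{curve_theorem_2}---that $q_0$ is the endpoint of an $X$-geodesic passing through $\Int T$, which pins down $d(\bar p,\bar q_0)$ in the subdivision of the planar comparison triangle and permits Alexandrov's lemma to close the gap. Once this is accomplished, the CAT($\kappa$) angle comparison at $p$ is established, and the argument at $q$ and $r$ is symmetric; combined with completeness, this shows $Y$ is a complete CAT($\kappa$) space.
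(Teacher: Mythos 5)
The decisive gap is the last step of your chain. You correctly get $\angle_p^Y(q,r)=\angle_p^X(q_0,r_0)\le\angle_p^{(\kappa),X}(q_0,r_0)$, but the remaining inequality $\angle_p^{(\kappa),X}(q_0,r_0)\le\angle_p^{(\kappa),Y}(q,r)$ is exactly what your sketch cannot deliver. Placing $\bar q_0,\bar r_0$ on the side $[\bar q,\bar r]$ of the $Y$-comparison triangle only helps if you know $d_Y(p,q_0)\le d(\bar p,\bar q_0)$; but that is precisely the CAT($\kappa$) distance comparison for $Y$ applied to the triangle $T$ and the point $q_0\in[q,r]_Y$, i.e.\ the statement being proved, so the argument is circular. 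Without it, elementary monotonicity runs the wrong way: here both legs shrink (from $d_Y(p,q),d_Y(p,r)$ to $d_Y(p,q_0),d_Y(p,r_0)$) while the opposite side also shrinks ($d_X(q_0,r_0)\le d_Y(q_0,r_0)\le d_Y(q,r)$), and by the law of cosines the comparison angle can perfectly well increase under such a change. Nothing about $q_0$ being the endpoint of an $X$-geodesic through $\Int T$ ``pins down'' $d(\bar p,\bar q_0)$. The paper sidesteps this entirely by running Alexandrov's Lemma in the opposite direction: since $[p,q'']_X\subset T\cup\Int T\subset E$ is also a $Y$-geodesic and $\angle_p^Y(q,q'')=0$ (via \thmref{limit_segment_theorem}), one subdivides $T$ along the cevian $[p,q'']_Y$ and reduces to the sub-triangle $(p,q'',r)$; doing the same at $r$, one may assume both sides of $T$ at $p$ are themselves $X$-geodesics lying in $E$, at which point the legs have equal $d$- and $d_Y$-lengths, the opposite side satisfies $d\le d_Y$, and the comparison $\angle_p^Y=\angle_p^X\le\angle_p^{(\kappa),X}\le\angle_p^{(\kappa),Y}$ is immediate. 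Your proof needs this reduction (or some genuinely new argument) in place of the unproved nested-triangle inequality.

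A secondary gap: the claim that in a uniquely geodesic space non-simple geodesic triangles are degenerate is false. Two sides issuing from a vertex can share a nondegenerate initial segment and then diverge (a tripod configuration); such a triangle is not contained in a single geodesic, the vertex inequality at the \emph{other} vertices is not trivial, and your machinery (\thmref{limit_segment_theorem}, \lemref{feature_2}) requires $T$ to be globally simple, not merely simple near $p$. The correct reduction, as in the paper, trims the maximal common subarc at $q$ (and at $r$), observes the trimmed-off triangle is degenerate with zero angle at $p$, and again invokes Alexandrov's Lemma to pass to the simple triangle.
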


\begin{proof}
Let $Y$ be the space $E$, with the induced path metric.
We want to show that 
$\angle_p^Y (q,r) \le \angle_p^{(\k), Y} (q,r)$ for every triple of distinct points $p,q,r \in E$.
So let $T$ be a geodesic triangle in $Y$ with distinct vertices $p,q,r \in E$.
If $T$ is not simple in a neighborhood of $p$ in $X$, then by uniqueness of geodesics, either $p$ lies on the geodesic $[q,r]_Y$ in $Y$ or both geodesics $[p,q]_Y$ and $[p,r]_Y$ in $Y$ must coincide for some distance from $p$.
In the first case, $T$ is degenerate with $\angle_p^Y (q,r) = \angle_p^{(\k), Y} (q,r) = \pi$.
In the second, $\angle_p^Y (q,r) = 0 \le \angle_p^{(\k), Y} (q,r)$.
Thus we may assume $T$ is simple in a neighborhood $U$ of $p$.

Let $[q,q']_Y$ be the maximal subarc (possibly degenerate) of the geodesic $[q,r]_Y$ in $Y$ such that $[q,q']_Y$ coincides with the geodesic $[q,p]_Y \subset T$ in $Y$ from $q$ to $p$.
The geodesic triangle in $Y$ with vertices $p,q,q'$ is therefore degenerate with $\angle_p^Y (q, q') = \angle_p^{(\k), Y} (q, q') = 0$.
By Alexandrov's Lemma \cite[Lemma II.4.10]{bridson}, it suffices to prove the theorem when $q = q'$.
Similarly, we may assume the geodesics $[r,q]_Y$ and $[r,p]_Y$ in $Y$ do not coincide near $r$.
Thus we may assume $T$ is simple.

By \thmref{limit_segment_theorem} and \lemref{feature_2}, we have a geodesic $[p,q'']_X$ in $X$ from $p$ to $q'' \in [q,r]_Y$ such that $[p,q'']_X \subset T \cup \Int T$.
The geodesic triangle in $Y$ with vertices $p,q,q''$ is either simple, in which case the equation in \thmref{limit_segment_theorem} applies, or both edges of the triangle from $p$ coincide for some positive distance out from $p$; in either case, $\angle_p^Y (q, q'') = 0 \le \angle_p^{(\k), Y} (q, q'')$.
By Alexandrov's Lemma \cite[Lemma II.4.10]{bridson}, it suffices to consider the case $q = q''$.
A similar argument holds for $r$ in place of $q$.
Thus we may assume both geodesics $[p,q]_Y$ and $[p,r]_Y$ in $Y$ lie completely in $X$.

Now $d (p, q') = d_Y (p, q')$ and $d (p, r') = d_Y (p, r')$.
And since $d (q', r') \le d_Y (q', r')$, we see directly that
\[\angle_p^Y (q', r') = \angle_p^X (q', r') \le \angle_p^{(\k), X} (q', r') \le \angle_p^{(\k), Y} (q', r').\]
(Recall $\angle_p^X (q',r') \le \angle_p^{(\k), X} (q',r')$ by the CAT(\k) inequality.)
Thus $Y$ is CAT(\k).
Completeness comes from \lemref{complete and geodesic}.
\end{proof}

\bibliographystyle{amsplain}
\bibliography{catk2cxs}

\end{document}